\documentclass[12pt]{amsart}
\usepackage{fullpage, amssymb, xy, hyperref}
\xyoption{all}

\title[Cuspidal Representations of General Linear Groups]{On Cuspidal Representations of General Linear Groups over Discrete Valuation Rings}

\author[A.-M.~Aubert]{Anne-Marie Aubert}
\address{C.N.R.S., Institut de Math\'ematiques de Jussieu, 175 rue du Chevaleret, 75013 Paris, France.}

\author[U.~Onn]{Uri Onn$^{~\star}$}
\thanks{$\star$ Supported by the Israel Science Foundation, ISF
grant no. 555104, by the Edmund Landau Minerva Center for Research
in Mathematical Analysis and Related Areas, sponsored by the Minerva
Foundation (Germany).}
\address{Department of Mathematics, Ben Gurion University of the Negev, Be'er Sheva 84105, Israel.}

\author[A.~Prasad]{Amritanshu Prasad}
\address{The Institute of Mathematical Sciences, CIT campus, Taramani, Chennai 600 113, India.}

\author[A.~Stasinski]{Alexander Stasinski${}^\dagger$}
\address{DPMMS, University of Cambridge, Wilberforce Road, Cambridge, CB3
0WB, U.~K.}\thanks{${}^\dagger$ Supported at various times by EPSRC Grants GR/T21714/01 and EP/C527402.}

\subjclass[2000]{22E50, 11S37}
\keywords{Cuspidal representations; general linear groups; local rings}

\numberwithin{equation}{section}

\theoremstyle{remark}

\newtheorem*{remark*}{Remark}
\theoremstyle{plain}
\newtheorem{theorem}[equation]{Theorem}

\newtheorem{prop}[equation]{Proposition}
\newtheorem{lemma}[equation]{Lemma}
\newtheorem*{theorema}{Theorem~A}
\newtheorem*{theoremb}{Theorem~B}
\newtheorem*{theoremc}{Theorem~C}
\newtheorem*{theoremd}{Theorem~D}
\newtheorem*{theoreme}{Theorem~E}

\newtheorem*{question*}{Question}
\theoremstyle{definition}
\newtheorem{defn}[equation]{Definition}

 \theoremstyle{plain}
\newtheorem{thm}{Theorem}[section]
  \theoremstyle{remark}
  \newtheorem*{acknowledgement*}{Acknowledgement}
  \theoremstyle{plain}
  \newtheorem{lem}[thm]{Lemma}

\DeclareMathOperator{\Stab}{Stab}
\newcommand\leftexp[2]{\hspace{1pt}{\vphantom{#2}}^#1\hspace{-1pt}#2}
\newcommand{\scr}{\mathcal}
\newcommand{\germ}{\mathfrak}

\newcommand{\Aut}{\mathrm{Aut}}
\newcommand{\C}{\mathbb{C}}
\newcommand{\embed}[2]{{#1\hookrightarrow #2}}
\newcommand{\fA}{\mathfrak{A}}

\newcommand{\F}{\mathbb{F}}
\newcommand{\Fq}{\F_q}
\newcommand{\Gal}{\mathrm{Gal}}
\newcommand{\GL}{\mathrm{GL}}
\newcommand{\Hom}{\mathrm{Hom}}
\newcommand{\I}{\mathrm{I}}
\newcommand{\Ker}{\mathrm{Ker}}
\newcommand{\Ind}[2]{\mathrm{Ind}_{#1}^{#2}}
\newcommand{\cInd}[2]{\mathrm{c}\text{-}\mathrm{Ind}_{#1}^{#2}}
\newcommand{\inv}{^{-1}}
\newcommand{\linkn}{\embed{\lambda}{k^n}}
\newcommand{\lonkn}{\surj{k^n}{\lambda}}
\newcommand{\M}{M}
\newcommand{\N}{\mathbb{N}}
\newcommand{\Oh}{\mathfrak{o}}
\newcommand{\OO}{\mathfrak{O}}
\newcommand{\pp}{\mathfrak{p}}
\newcommand{\PP}{\mathfrak{P}}
\newcommand{\Rep}[1]{\mathrm{Rep}(#1)}
\newcommand{\re}{regular elliptic }
\newcommand{\surj}[2]{{#1\twoheadrightarrow #2}}
\newcommand{\tr}{\mathrm{tr}}
\newcommand{\tOh}{\OO}
\newcommand{\tok}{\tOh_k^\times}

\newcommand{\p}{\varpi}
\renewcommand{\vec}{\mathbf}

\newcommand{\T}{\p}
\renewcommand{\mathbb}{\mathbf}
\begin{document}

\begin{abstract}
  We define a new notion of cuspidality for representations of $\GL_n$ over a finite quotient $\Oh_k$ of the ring of integers $\Oh$ of a non-Archimedean local field $F$ using geometric and infinitesimal induction functors, which involve automorphism groups $G_\lambda$ of torsion $\Oh$\nobreakdash-modules.
  When $n$ is a prime, we show that this notion of cuspidality is equivalent to strong cuspidality, which arises in the construction of supercuspidal representations of $\GL_n(F)$.
  We show that strongly cuspidal representations share many features of cuspidal representations of finite general linear groups.
  In the function field case, we show that the construction of the representations of $\GL_n(\Oh_k)$ for $k\geq 2$ for all $n$ is equivalent to the construction of the representations of all the groups $G_\lambda$.
  A functional equation for zeta functions for representations of $\GL_n(\Oh_k)$ is established for representations which are not contained in an infinitesimally induced representation.
  All the cuspidal representations for $\GL_4(\Oh_2)$ are constructed.
  Not all these representations are strongly cuspidal.
\end{abstract}

\maketitle


\section{Introduction}
\label{sec:intro}

The irreducible characters of $\GL_n(\Fq)$ were computed by J.~A.~Green in 1955 \cite{MR0072878}.
In Green's work, parabolic induction was used to construct many irreducible characters of $\GL_n(\Fq)$ from irreducible characters of smaller general linear groups over $\Fq$.
The representations which could not be obtained in this way, known as \emph{cuspidal representations}, were shown to be in canonical bijective correspondence with Galois orbits of norm-primitive characters of $\F_{q^n}^\times$ (these are characters which do not factor through the norm map $\F_{q^n}^\times\to \F_{q^d}^\times$ for any proper factor $d$ of $n$).

Let $F$ be a non-Archimedean local field with ring of integers $\Oh$.
Let $\pp$ be the maximal ideal in $\Oh$, and $\Oh_k=\Oh/\pp^k$ for $k\geq 1$.
Thus $\Oh_1$ is a finite field, the residue field of $F$, which we take to be $\Fq$.
In contrast with $\GL_n(\Oh_1)$, not much is known in general about the representation theory of $\GL_n(\Oh_k)$.
Unlike general linear groups over fields, for which conjugacy classes are parameterized by Jordan canonical forms, the classification of conjugacy classes in $\GL_n(\Oh_k)$ for all $n$ and any $k\geq 2$ contains the matrix pair problem \cite[Section~4]{Nag78}, which is a wild classification problem.

The representations of $\GL_n(\Oh)$ received considerable attention after supercuspidal representations of $\GL_n(F)$ were constructed by induction from a compact-modulo-center subgroup \cite{MR0233931,MR0492087,MR0507253}.
A class of representations (\emph{repr\'esentations tr\`es cuspidales}) of the maximal compact-modulo-center subgroups
which give rise to irreducible supercuspidal representations of $\GL_n(F)$ were identified by Carayol \cite{MR760676}.
When the maximal compact subgroup modulo center in question is $F^\times \GL_n(\Oh)$, the restrictions of these representations to $\GL_n(\Oh)$ correspond to what we call \emph{strongly cuspidal representations} of $\GL_n(\Oh_k)$ for some $k$ (Definition~\ref{defn:Vcuspidality}).
Carayol used these representations to construct all the supercuspidal representations of $\GL_n(F)$ when $n$ is prime.
The classification of supercuspidal representations of $\GL_n(F)$ for $n$
arbitrary was completed by Bushnell and Kutzko \cite{MR1204652}.
Recently, Paskunas \cite{Paskunas} proved that given an irreducible
supercuspidal representation $\pi$ of $\GL_n(F)$, there exists a unique
(up to isomorphism) irreducible 
representation $\tau$ of $\GL_n(\Oh)$, such that $\tau$ is a type for the Bernstein component of $\pi$. 
Hence representations of $\GL_n(\Oh)$ occur
naturally in the representation theory of $\GL_n(F)$.
Nevertheless, with respect to $\GL_n(\Oh)$, since the general representation 
theory is unmanageably complicated, only those very
special representations that are needed to understand the representations
of the $p$\nobreakdash-adic group itself have been considered.

In this article, we take the point of view that the representation theory of $\GL_n(\Oh)$ is interesting in its own right, and while extremely complicated, does display a certain structure.
To this end, a new definition of cuspidality is introduced for representations of $\GL_n(\Oh_k)$.
This definition is closer in spirit to the characterization in \cite{MR0072878} of cuspidal representations as those which do not occur in representations obtained by parabolic induction.
More specifically, let $\Lambda$ denote the set of all partitions of all positive integers.
The isomorphism classes of finitely generated torsion $\Oh$-modules are parameterized by $\Lambda$.
For any $\Oh$-module $\Oh_\lambda=\oplus_{i=1}^{m}\Oh_{\lambda_i}$ of type $\lambda=(\lambda_1, \ldots, \lambda_m)\in \Lambda$, let $G_{\lambda}=G_{\lambda,F}$ stand for its automorphism group.
Thus, for example, $G_{k^n}=\GL_n(\Oh_k)$.
Say that $\lambda\leq \mu$ if $\Oh_\lambda$ can be embedded in $\Oh_\mu$.
We call an irreducible representation  of $G_{k^n}$ {\em cuspidal} (see Definition~\ref{defn:cuspidality}) if it cannot be constructed from lower building blocks.
By lower building blocks we mean the representations of $G_\lambda$, where $\lambda< k^n$.
These automorphism groups play the role of Levi components of proper parabolic subgroups of $\GL_n(\Oh_1)$.
Representations of $G_{k^n}$ are constructed from those of $G_\lambda$ using \emph{infinitesimal} and \emph{geometric} induction (Section~\ref{sec:functors}).
Our first result, which is proved in Section~\ref{sec:comparison}, compares cuspidality with strong cuspidality.
\begin{theorema}
  Every strongly cuspidal representation is cuspidal. When $n$ is prime every cuspidal representation is strongly cuspidal.
\end{theorema}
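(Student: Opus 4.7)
The strategy is to analyze both implications through the restriction of the representation to the abelian congruence subgroups $K^i = 1 + \pp^i M_n(\Oh_k)$ for $i \geq \lceil k/2 \rceil$. Characters of these subgroups are parameterized by elements $\beta \in M_n(\Oh_{k-i})$ via $1+x\mapsto \psi(\tr(\beta x))$ for a fixed additive character $\psi$ of $\Oh$, and strong cuspidality asserts that every such $\beta$ occurring in $\pi|_{K^i}$ is regular elliptic, meaning its reduction mod $\pp$ has irreducible characteristic polynomial of degree $n$. Cuspidality, on the other hand, forbids $\pi$ from being contained in any geometric or infinitesimal induction from $G_\lambda$ with $\lambda<k^n$. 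The plan is to translate both conditions into equivalent statements about which $\beta$'s may appear in $\pi|_{K^i}$ and about the structure of their stabilizers, then compare them.

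For the first direction, suppose $\pi$ is strongly cuspidal. The plan is to show that if $\pi$ were a subrepresentation of an infinitesimally or geometrically induced representation from some $G_\lambda$ with $\lambda<k^n$, then by Frobenius reciprocity the restriction of $\pi$ to the relevant congruence subgroup would contain a character $\psi_\beta$ whose $\beta$ preserves a non-trivial $\Oh$-submodule structure of type $\lambda$. Such a $\beta$ has reduction mod $\pp$ with a proper invariant subspace over $\Fq$, so its characteristic polynomial factors nontrivially, contradicting regular ellipticity. This compatibility between the form of the inducing data and the shape of $\beta$ is essentially formal and should fall out of the definitions in Section~\ref{sec:functors}, combined with Mackey theory applied to the pair $(K^i, H_\lambda)$, where $H_\lambda$ denotes the geometric/infinitesimal stabilizer.

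For the converse, assume $n$ is prime and $\pi$ is not strongly cuspidal. Then some $\psi_\beta$ with $\beta$ not regular elliptic occurs in $\pi|_{K^i}$. The primality of $n$ is used decisively here: the characteristic polynomial of the reduction $\bar\beta\in M_n(\Fq)$ either has a root in $\Fq$, in which case $\bar\beta$ stabilizes a line and hence a nontrivial flag of $\Oh$-submodules (feeding into geometric induction from a $G_\lambda$ with $\lambda$ a strict refinement of $k^n$), or its characteristic polynomial is a power of a proper irreducible factor, which for $n$ prime forces the polynomial to split as a power of a linear factor, again producing an invariant submodule. In either case the plan is to use Clifford theory with respect to $K^i$ to produce a proper intermediate subgroup isomorphic to some $H_\lambda$ from which $\pi$ is induced, and then to identify the resulting induced package with either a geometric or an infinitesimal induction from $G_\lambda$.

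The main obstacle is the second implication, specifically the step of bundling the Mackey/Clifford data for a non-regular-elliptic $\beta$ into a bona fide geometric or infinitesimal induction functor from a group of the form $G_\lambda$. The subtlety is that the stabilizer of $\psi_\beta$ in $G_{k^n}$ is in general not itself a $G_\lambda$; one must pass to an intermediate extension and show that the induced representation already factors through the appropriate $G_\lambda$ up to the functors defined in Section~\ref{sec:functors}. The primality of $n$ is essential precisely because it collapses the otherwise elaborate list of partitions $\lambda<k^n$ into a small number of cases, each of which corresponds cleanly to one of the two induction functors; without primality, mixed partitions would produce a $\beta$ that is neither regular elliptic nor admits an obvious invariant submodule structure matching a single $G_\lambda$, which is the phenomenon that will lead to the examples in $\GL_4(\Oh_2)$ announced in the abstract.
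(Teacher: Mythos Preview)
Your first direction is essentially on track, though you omit one point the paper handles explicitly: cuspidality is defined in terms of \emph{all twists} of $\pi$, so you must check that twisting by $\chi\circ\det$ does not affect strong cuspidality. This is easy (restriction of $\chi\circ\det$ to $N_{k-1}$ corresponds to a scalar matrix, and adding a scalar preserves irreducibility of the characteristic polynomial), but it needs to be said.

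The second direction has a genuine gap. Your dichotomy for a non-regular-elliptic $\bar\beta$ is not exhaustive, and in fact your case (ii) collapses into case (i): if the characteristic polynomial of $\bar\beta$ is $f^a$ with $f$ irreducible of degree $d>0$, then $ad=n$ prime forces $d=1$, so $\bar\beta$ already has a root in $\Fq$. What you are missing entirely is the case where the characteristic polynomial is reducible with \emph{no} linear factor, e.g.\ a product of coprime irreducibles of degrees $2$ and $n-2$. The paper's dichotomy is: either $\bar\beta$ has an eigenvalue in $\Fq$ (then a twist makes one row of $\bar\beta$ vanish, and one checks directly that $\pi$ is \emph{infinitesimally} induced from $G_{(k^{n-1},k-1)}$---not geometrically induced as you write), or it does not, in which case primality of $n$ forces the characteristic polynomial to be non-primary, i.e.\ to have two coprime irreducible factors.

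The substantive missing ingredient is the passage from ``$\bar\beta$ is block upper-triangular'' to ``$\pi$ is geometrically induced.'' Knowing that $\psi_\beta$ is trivial on $N_{k-1}\cap U_{n_1,n_2}$ is only a \emph{necessary} condition for $\pi$ to have $U_{n_1,n_2}$-invariants; it is far from sufficient, and Clifford theory for $K^i$ does not produce this for you. The paper proves this as a separate Proposition~\ref{prop:non-monatomic}: assuming the two diagonal blocks of $\bar\beta$ have coprime characteristic polynomials, one runs an induction through a chain of intermediate groups $P_0\supset P_1\supset\cdots\supset P_k=P_{n_1,n_2}$, at each step conjugating by a unipotent element to kill the off-diagonal part of the relevant character. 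The coprimality is used exactly to show that the map $c\mapsto c\hat w_1-\hat w_2 c$ is surjective, which is what makes this conjugation possible. Your proposed Clifford/Mackey packaging does not see this mechanism, and without the coprimality hypothesis the step simply fails (indeed, this is how the non-strongly-cuspidal cuspidals of $\GL_4(\Oh_2)$ arise).
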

When $n$ is not prime, it is not true that every cuspidal representation is strongly cuspidal.
In Section~\ref{sec:alex}, all the cuspidal representations of $\GL_4(\Oh_2)$ are constructed. 
Among these are representations which are not strongly cuspidal.

The construction of strongly cuspidal representations is well-known \cite{MR0233931,MR0396859,MR0492087}.
When $n$ is prime, then by Theorem~A, all cuspidal representations are obtained in this manner.
Moreover, for all $n$, the strongly cuspidal representations have properties analogous to cuspidal representations of $\GL_n(\Oh_1)$.
Firstly, they can be parameterized in an analogous fashion.
Suppose that $E$ is an unramified extension of $F$ of degree $n$, and $\OO$ is the integral closure of $\Oh$ in $E$.
Let $\PP$ denote the maximal ideal in $\OO$ and $\OO_k$ denote the finite quotient ring $\OO/\PP^k$.
For $k>1$, a character $\tok\to \C^\times$ is said to be \emph{strongly primitive} if its restriction to $\ker(\tok\to \OO_{k-1}^\times)\cong \OO_1$ does not factor through any proper subfield via the trace map.
A character of $\OO_1^\times$ is said to be strongly primitive if it is norm-primitive.
In Section~\ref{sec:Green} we prove
\begin{theoremb}
  There is a canonical bijective correspondence between strongly cuspidal representations of $G_{k^n}$ and $\Gal(E/F)$-orbits of strongly primitive characters of $\tok$.
\end{theoremb}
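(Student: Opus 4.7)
The plan is to construct the bijection explicitly via the Shintani--G\'erardin--Howe construction and then verify well-definedness on Galois orbits, injectivity, and surjectivity. Fix an $\Oh$-basis of $\OO$; this determines a ring embedding $\OO\hookrightarrow M_n(\Oh)$ and hence, after reducing modulo $\pp^k$, an embedding $\iota:\tok\hookrightarrow G_{k^n}$ whose image is a non-split maximal torus. Any two such embeddings are $G_{k^n}$-conjugate, and the normalizer of $\iota(\tok)$ modulo $\iota(\tok)$ is canonically $\Gal(E/F)$; consequently any construction $\theta\mapsto\rho_\theta$ depending only on the $G_{k^n}$-conjugacy class of the image character automatically factors through $\Gal(E/F)$-orbits. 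The case $k=1$ is Green's theorem \cite{MR0072878}; for $k\geq 2$ write $K^j:=\ker(G_{k^n}\twoheadrightarrow\GL_n(\Oh_j))$ and fix an additive character $\psi$ of $F$ of conductor $\pp$.

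For even $k=2\ell$, $K^\ell$ is abelian and the rule $1+\p^\ell x\mapsto x$ together with the trace pairing $(x,y)\mapsto\psi(\p^{-k}\tr(xy))$ identifies characters of $K^\ell$ with $M_n(\Oh_\ell)$. A strongly primitive $\theta$ restricts to $\iota(\tok)\cap K^\ell$ as the character $\chi_\beta$ attached to some $\beta\in\OO_\ell$; strong primitivity translates to $\beta$ having residue generating $\F_{q^n}/\Fq$ inside $\OO_1\subset M_n(\Oh_1)$, and this forces the $G_{k^n}$-stabilizer of $\chi_\beta$ to equal $\iota(\tok)K^\ell$. Extend $\theta$ to a character $\widetilde\theta$ of $\iota(\tok)K^\ell$ restricting to $\chi_\beta$ on $K^\ell$, and set $\rho_\theta:=\Ind{\iota(\tok)K^\ell}{G_{k^n}}\widetilde\theta$; Mackey's irreducibility criterion applies. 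For odd $k=2\ell+1$, $K^\ell$ is two-step nilpotent with centre $K^{\ell+1}$, and $\chi_\beta$ viewed as a character of $K^{\ell+1}$ induces via the commutator map an alternating form on $K^\ell/K^{\ell+1}$ with radical $\iota(\tok)K^{\ell+1}/K^{\ell+1}$; the Heisenberg--Weil construction yields a canonical extension to $\iota(\tok)K^\ell$, whose induction to $G_{k^n}$ is $\rho_\theta$.

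Three verifications remain. First, strong cuspidality of $\rho_\theta$: since $\beta$ is regular elliptic, no $G_\lambda$ with $\lambda<k^n$ contains the $G_{k^n}$-stabilizer of $\chi_\beta$, so Frobenius reciprocity forbids $\rho_\theta$ from appearing in any infinitesimal or geometric induction from a proper $G_\lambda$. Second, injectivity: an isomorphism $\rho_\theta\cong\rho_{\theta'}$ forces, by Mackey, the underlying $K^\ell$- (or $K^{\ell+1}$-) characters to be $G_{k^n}$-conjugate, hence $\beta\sim\beta'$, hence $\theta$ and $\theta'$ lie in a common $\Gal(E/F)$-orbit. Third, surjectivity: for a strongly cuspidal $\rho$, Clifford theory applied to $\rho|_{K^\ell}$ (resp.\ $\rho|_{K^{\ell+1}}$) extracts a $G_{k^n}$-orbit of characters; strong cuspidality must be translated into the statement that this orbit is represented by some regular elliptic $\beta\in\OO_\ell$, after which running the construction in reverse produces a strongly primitive $\theta$ with $\rho\cong\rho_\theta$, confirmed by a dimension count against $[G_{k^n}:\iota(\tok)K^\ell]$.

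The main obstacle is the surjectivity step: one must show that the vanishing conditions defining strong cuspidality (against the functors of Section~\ref{sec:functors}) are sufficient to force the orbit datum to be regular elliptic and to lie in $\OO_\ell$. This requires realising each proper $G_\lambda$ as the stabilizer in $G_{k^n}$ of some character of $K^\ell$ attached to a non-regular or non-elliptic element, so that any failure of these properties on the orbit of $\rho$ would exhibit $\rho$ as a constituent of a non-trivial infinitesimal or geometric induction. The odd case additionally demands care, because the Heisenberg--Weil step involves a choice of Lagrangian in $K^\ell/\iota(\tok)K^{\ell+1}$, and the independence of $\rho_\theta$ from this choice (as well as the inversion of the construction) must be verified before the bijection is declared canonical.
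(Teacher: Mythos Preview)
Your construction of $\rho_\theta$ via induction from the torus-times-congruence subgroup and your Mackey-theoretic injectivity argument are essentially the paper's. The problem is that you have conflated Definitions~\ref{defn:Vcuspidality} and~\ref{defn:cuspidality}. \emph{Strong cuspidality} is not defined by vanishing against the functors of Section~\ref{sec:functors}; that is the definition of \emph{cuspidality}. For $k>1$, $\rho$ is strongly cuspidal precisely when the similarity class $\Omega_1(\rho)\subset M_n(\Oh_1)$ has irreducible characteristic polynomial.

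This misreading causes two concrete errors. In your first verification you argue that $\rho_\theta$ lies in no geometric or infinitesimal induction, which is Theorem~A territory and overkill; under the correct definition one simply notes that $\rho_\theta|_{N_{k-1}}$ contains $\psi_{\bar\beta}$ with $\bar\beta\in\OO_1$ generating $\F_{q^n}$ over $\Fq$, hence having irreducible characteristic polynomial in $M_n(\Oh_1)$. More seriously, your surjectivity step is, as written, an attempt to prove a false statement: if $\rho$ only satisfies the vanishing conditions against the induction functors (i.e., is cuspidal), its orbit datum need \emph{not} be regular elliptic---Section~\ref{sec:alex} exhibits cuspidal representations of $\GL_4(\Oh_2)$ with reducible orbit. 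Under the correct hypothesis of strong cuspidality the obstacle you flag evaporates: irreducibility of $\Omega_1(\rho)$ means the characteristic polynomial of $\Omega_{k-l}(\rho)$ is irreducible mod $\pp$, and Hensel's lemma applied to any lift to $\Oh[t]$ produces a root in $\OO$, hence an element $a\in\OO_{k-l}\cap\Omega_{k-l}(\rho)$ from which one runs the construction in reverse.

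A secondary gap: in the odd case you assert the Heisenberg--Weil step yields a \emph{canonical} extension to $\iota(\tok)K^\ell$, yet later worry about Lagrangian dependence. The Heisenberg step (Lemma~\ref{lemma:ltolprime}) only produces a unique irreducible on $N_{l'}L$; extending canonically to $N_{l'}\tok$ is a separate, nontrivial step, for which the paper invokes G\'erardin's result (Lemma~\ref{lemma:intertwiner}), pinning down the extension by its character on regular elliptic elements. Without this, the bijection is not yet canonical when $k$ is odd.
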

The group of units $\tOh_k^{\times}$ is embedded in $G_{k^n}$ since $\tOh_k \simeq \Oh^n_k$ as $\Oh_k$-modules. 
An element of $G_{k^n}$ is said to be \re if it is conjugate to an element of $\tOh_k^{\times}$ whose image in $\tOh_1^{\times}$ lies in no proper subfield. 
In section~\ref{sec:construction}, we establish another property that strongly cuspidal representations share with cuspidal representations of $\GL_n(\Oh_1)$, which is that the correspondence of Theorem~B is well-behaved with respect to character values on \re elements.
\begin{theoremc}
  Let $\omega$ be a strongly primitive character of $\tOh_k^{\times}$ and let $\Theta_{\omega}$ be the corresponding strongly cuspidal character of $G_{k^n}$. Then for all \re elements $u \in \tOh_k^{\times} \subset G_{k^n}$
  \[
  \Theta_{\omega}(u)= (-1)^{(n-1)k} \sum_{\gamma \in \Gal(E/F)} \omega({^{\gamma}u}),
  \]
 Moreover, $\Theta_{\omega}$ vanishes on conjugacy classes which do not intersect $\tOh_k^{\times} \cdot  \Ker\{G_{k^n} \to G_{\lceil k/2 \rceil ^n} \}$.
\end{theoremc}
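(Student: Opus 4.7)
The plan is to combine the explicit construction of $\Theta_\omega$ as an induced representation (see Section~\ref{sec:construction}) with Frobenius's formula for induced characters. Write $K_j = \Ker\{G_{k^n}\to G_{j^n}\}$. For $k\geq 2$, the construction realises $\Theta_\omega = \Ind{H}{G_{k^n}}\tilde\omega$ with $H\subseteq \tok\cdot K_{\lceil k/2\rceil}$ and $\tilde\omega$ an extension of $\omega$: a one-dimensional character when $k$ is even, and a Heisenberg/Weil extension when $k$ is odd (the case $k=1$ is Green's classical formula for $\GL_n(\Fq)$). The vanishing assertion is then immediate, since an induced character vanishes on any conjugacy class disjoint from the inducing subgroup, and $H\subseteq \tok\cdot K_{\lceil k/2\rceil}$.

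For the character value on a regular elliptic $u\in\tok$, Frobenius combined with $C_{G_{k^n}}(u)=\tok$ yields
\[
\Theta_\omega(u)\;=\;\frac{|\tok|}{|H|}\sum_{v\in[u]\cap H}\tr\tilde\omega(v),
\]
where $[u]$ denotes the $G_{k^n}$-conjugacy class of $u$. The image of $u$ in $H/K_{\lceil k/2\rceil}\cong \tOh_{\lceil k/2\rceil}^\times$ remains regular elliptic, so reduction modulo $K_{\lceil k/2\rceil}$ sends $[u]\cap H$ onto exactly the Galois orbit $\{{}^\gamma u\bmod K_{\lceil k/2\rceil}:\gamma\in\Gal(E/F)\}$. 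Since $\tilde\omega$ extends $\omega$, summation over the fibre above each Galois conjugate yields a term of the form $C\cdot\omega({}^\gamma u)$ with the constant $C$ independent of $\gamma$.

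It remains to identify $C=(-1)^{(n-1)k}$. For $k$ even, $\tilde\omega$ is a one-dimensional character and the fibre sum reduces to a straightforward counting, giving $C=1=(-1)^{(n-1)k}$. For $k$ odd with $k\geq 3$, $\tr\tilde\omega(u)$ picks up the trace of a Heisenberg/Weil extension on the symplectic space $K_{(k-1)/2}/K_{(k+1)/2}\cong \germ{gl}_n(\Fq)$ equipped with the form $(X,Y)\mapsto\psi_\beta([X,Y])$, where $\beta$ is determined by $\omega|_{\Ker(\tok\to\OO_{k-1}^\times)}$; on this space $u$ acts as a regular elliptic automorphism with one-dimensional fixed subspace $\OO_1\subset\germ{gl}_n(\Fq)$. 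A Gauss sum evaluation --- the level-$k$ analogue of Green's original calculation on $\GL_n(\Fq)$ --- then gives $C=(-1)^{n-1}=(-1)^{(n-1)k}$. The main obstacle is precisely this Weil-character computation in the odd case; the even case amounts to bookkeeping with ordinary characters on an abelian group.
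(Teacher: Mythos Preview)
Your broad strategy---Frobenius for the induced character together with a Heisenberg/Weil input for the sign---is the paper's strategy, but there is a genuine error in the odd-$k$ case: you have the inducing subgroup wrong. With $l=\lceil k/2\rceil$ and $l'=\lfloor k/2\rfloor$, the construction in Section~\ref{sec:construction} gives $\rho_\omega=\Ind{N_{l'}\tok}{G_{k^n}}\tau'_\omega$, so $H=N_{l'}\tok$. For odd $k$ this \emph{strictly contains} $\tok\cdot K_{\lceil k/2\rceil}=N_l\tok$, since $N_{l'}/N_l\cong\M_n(\Oh_1)$. Your containment $H\subseteq\tok\cdot K_{\lceil k/2\rceil}$ is therefore reversed, and two of your steps break. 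First, the vanishing of $\Theta_\omega$ off conjugates of $N_l\tok$ is not immediate from Frobenius: Frobenius only gives vanishing off conjugates of the larger group $N_{l'}\tok$. One must also know that the character of $\tau'_\omega$ itself vanishes on elements of $N_{l'}\tok$ not conjugate into $N_l\tok$; this is precisely what Lemmas~\ref{lemma:ltolprime} and~\ref{lemma:intertwiner} supply (the Heisenberg character $\sigma'_\omega$ is supported on its centre $N_lL$). Second, your identification $H/K_{\lceil k/2\rceil}\cong\OO_{\lceil k/2\rceil}^\times$ fails for odd $k$, so the fibre argument over $K_l$-cosets does not go through as written.

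For the sign $(-1)^{n-1}$ in the odd case, the paper does not carry out a direct Gauss-sum evaluation but quotes G\'erardin \cite[Theorem~1]{MR0396859}: the Frobenius generating $\Gal(E/F)$ acts on the weights of the symplectic $\OO_1^\times$-module $V=N_{l'}L/N_lL\cong\M_n(\Oh_1)/\OO_1$ as a Coxeter element on the root system of $\GL_n$, with a single symmetric orbit when $n$ is even and none when $n$ is odd, each symmetric orbit contributing a factor $-1$ (see the proof of Lemma~\ref{lemma:intertwiner}). Note also that the relevant symplectic space is $\M_n(\Oh_1)/\OO_1$, not $\M_n(\Oh_1)$ as you wrote; the form is degenerate along $\OO_1$. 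Once Lemma~\ref{lemma:intertwiner} yields $\tr\tau'_\omega(u)=(-1)^{n-1}\omega(u)$ for regular elliptic $u\in\tok$, the paper finishes with the single observation that the $G_{k^n}$-conjugacy class of $u$ meets $\tok$ exactly in $\{{}^\gamma u:\gamma\in\Gal(E/F)\}$; no summation over $K_l$-fibres is required.
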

\begin{remark*}
  Theorems~B and~C are due to Green when $k=1$.
  For $k>1$, the ideas used in the proofs can be found in the existing literature on supercuspidal representations of $\GL_n(F)$, the detailed account in Section~\ref{sec:parameterization} gives the complete picture, working entirely inside $\GL_n(\Oh)$.
  In particular, Theorem~C is deduced from \cite[Theorem~1]{MR0396859}.
  It is closely related to the result obtained by Henniart in \cite[Section~3.7]{MR1263525}. 
We also observe that in \cite{MR2048585} Lusztig gave a geometric
construction of representations (in the function field case) which is likely to include the description of strongly cuspidal representations of $G_{k^n}$ in terms of strongly primitive characters.
\end{remark*}
There already is evidence that the representation theory of a group such as $G_\lambda$ can be studied by breaking up the problem into two parts.
The first is to correctly define and understand the cuspidal representations.
The second is to construct the remaining representations from cuspidal representations of $G_\mu$ with $\mu< \lambda$.
This approach has been implemented successfully in \cite{ranktwo} for automorphism groups of modules of rank two.
Theorems~A, B and C provide further evidence of the validity of this approach when $\lambda=k^n$ and $n$ is a prime.

The inevitability of the family of groups $G_\lambda$ in the representation theory of $G_{k^n}$ or even $G_{2^n}$ can be seen from another perspective. 
In Section~\ref{sec:complexity}, we prove
\begin{theoremd}
  Let $F$ be a local function field. Constructing the irreducible representations of the family of groups $\{ G_{2^n,F}=\GL_n(\Oh_2)~|~  n \in \N \}$ is equivalent to constructing the irreducible representations of the family $\{G_{\lambda,E}~|~  \lambda \in \Lambda,  ~E/F ~\text{unramified extension}\}$.
\end{theoremd}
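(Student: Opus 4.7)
The plan is to exploit the fact that in the function-field case $F=\F_q((\p))$, the inclusion $\F_q\hookrightarrow\Oh_2=\F_q[\p]/(\p^2)$ is a ring homomorphism, so the short exact sequence
\[
1\longrightarrow M_n(\F_q)\longrightarrow \GL_n(\Oh_2)\longrightarrow \GL_n(\F_q)\longrightarrow 1
\]
splits, exhibiting $\GL_n(\Oh_2)$ as the semidirect product of the abelian kernel $\{I+\p X:X\in M_n(\F_q)\}\cong(M_n(\F_q),+)$ by $\GL_n(\F_q)$ acting by conjugation. Classical Clifford theory for a split extension with abelian kernel then reduces the classification of irreducible representations of $\GL_n(\Oh_2)$ to (i) computing the $\GL_n(\F_q)$-orbits on $\widehat{M_n(\F_q)}$, and (ii) computing the irreducible representations of each stabilizer.

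For step (i), the trace pairing $(X,Y)\mapsto\psi_0(\tr(XY))$, for a fixed non-trivial character $\psi_0\colon\F_q\to\C^\times$, identifies $\widehat{M_n(\F_q)}$ with $M_n(\F_q)$ as $\GL_n(\F_q)$-sets under conjugation, so the orbits are the similarity classes, parameterized by isomorphism classes of finite $\F_q[t]$-modules of $\F_q$-dimension $n$.

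For step (ii), given a representative $A$, I would view $V_A:=(\F_q^n,A)$ as an $\F_q[t]$-module and decompose it into primary components $V_A=\bigoplus_i V_i$, where $V_i$ is annihilated by a power of an irreducible polynomial $f_i\in\F_q[t]$ of degree $d_i$ and has module type $\lambda^{(i)}\in\Lambda$. Then $C_{\GL_n(\F_q)}(A)=\Aut_{\F_q[t]}(V_A)=\prod_i\Aut_{\F_q[t]}(V_i)$. The key identification, which uses the function-field hypothesis, is the isomorphism $\F_q[t]^\wedge_{(f_i)}\cong \OO_{E_i}$ of $\F_q$-algebras, where $E_i/F$ is the unique unramified extension of degree $d_i$: both sides are complete discrete valuation rings of characteristic $p$ with residue field $\F_{q^{d_i}}$, hence both are (non-canonically) isomorphic to $\F_{q^{d_i}}[[T]]$. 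Under this identification, $V_i$ becomes an $\OO_{E_i}$-module of type $\lambda^{(i)}$, yielding $\Aut_{\F_q[t]}(V_i)\cong G_{\lambda^{(i)},E_i}$.

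Combining these, the irreducible representations of $\GL_n(\Oh_2)$ are in canonical bijection with pairs $(A,\tau)$, where $A$ runs over similarity classes in $M_n(\F_q)$ and $\tau$ is an irreducible representation of $\prod_i G_{\lambda^{(i)},E_i}$; since irreducibles of a product are external tensor products of irreducibles of the factors, this gives one direction of the equivalence. For the converse, every pair $(\lambda,E)$ with $[E:F]=d$ appears as a single primary factor: choosing any irreducible $f\in\F_q[t]$ of degree $d$ and taking $A\in M_{d|\lambda|}(\F_q)$ realizing $\bigoplus_j\F_q[t]/(f^{\lambda_j})$ makes $G_{\lambda,E}$ the full centralizer, so its irreducibles appear exactly as the $\tau$'s parameterizing the representations of $\GL_{d|\lambda|}(\Oh_2)$ over the orbit of $A$. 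The main obstacle is the algebra isomorphism $\F_q[t]^\wedge_{(f)}\cong \OO_E$, which genuinely fails in mixed characteristic (the two rings then have different characteristics), explaining why the statement is restricted to local function fields; a secondary point, also automatic in the function-field case via the Teichm\"uller section, is that the splitting of the reduction map removes any cohomological obstruction to extending $\psi_A$ from the kernel to its stabilizer.
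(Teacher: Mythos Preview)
Your argument is correct and follows essentially the same route as the paper's: the semidirect-product splitting (specific to the function-field case), the Wigner--Mackey little groups method, identification of stabilizers with matrix centralizers via the trace pairing, primary decomposition of the resulting $\F_q[t]$-module, and recognition of each primary automorphism group as some $G_{\lambda,E}$. The one substantive difference is how you justify the key ring identification: you pass to completions and invoke the classification of equicharacteristic complete DVRs to get $\F_q[t]^\wedge_{(f)}\cong\OO_E\cong\F_{q^d}[[T]]$, whereas the paper proves the finite-level statement $\Oh_1[\T]/f(\T)^k\cong\OO_1[u]/u^k$ directly by an explicit Hensel-type lift of a root of $f$ modulo successive powers of $f(\T)$---more elementary and self-contained, but equivalent in effect.
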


Finally, we point out a suggestive connection to the Macdonald correspondence which might admit a higher level incarnation as well. 
Macdonald has established a correspondence between irreducible representations of $G_{1^n}$ and equivalence classes of $n$-dimensional tamely ramified representations of the Weil-Deligne group $W'_F$ \cite{MI4}.
One ingredient in this correspondence is a functional equation for the zeta function associated to $G_{1^n}$. 
It admits a straightforward generalization to $G_{k^n}$ for $k>1$. 
Let $\hat{f}$ denote a properly normalized additive Fourier transform of $f\in \C\left(\M_n(\Oh_k)\right)$ with respect to $\psi\left(\tr(\cdot)\right)$, where $\psi:\Oh_k \to \C$ is an additive character which does not factor through $\Oh_{k-1}$.
Let $\mathcal{Z}(f,\rho)=\sum_{g \in G_{k^n}} f(g)\rho(g) \in \text{End}_{\C}(V)$ where $f \in \C\left(\M_n(\Oh_k)\right)$ and $(\rho,V)$ is an irreducible representation of $G_{k^n}$. Denote by $\check{\rho}$ the contragredient representation of $\rho$. 
In Section~\ref{sec:zeta}, we prove

\begin{theoreme}
  If $\rho$ is not contained in an infinitesimally induced representation (in particular if $\rho$ is cuspidal), there exists a complex number $\varepsilon(\rho,\psi)$ and a such that
  \[
{^t\mathcal{Z}}(\hat{f},\check{\rho})=\varepsilon(\rho,\psi) \mathcal{Z}(f,\rho).
\]

\end{theoreme}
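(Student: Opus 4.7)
The plan is to run a Schur-lemma argument: both sides of the desired equality define $G \times G$-intertwining operators from $\C(\M_n(\Oh_k))$ to $\text{End}_\C(V)$, and the hypothesis on $\rho$ forces the relevant $\rho \boxtimes \check{\rho}$-isotypic component of $\C(\M_n(\Oh_k))$ to have multiplicity one. Write $G = G_{k^n}$ and let $G \times G$ act on $\C(\M_n(\Oh_k))$ by $((g_1,g_2)\cdot f)(x) = f(g_1^{-1} x g_2)$.

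First I would verify, by a direct change of variables, that $\mathcal{Z}(\cdot,\rho)$ intertwines this action with the action $(g_1,g_2) \cdot A = \rho(g_1) A \rho(g_2)^{-1}$ on $\text{End}_\C(V)$, which realises the latter as the external tensor product $\rho \boxtimes \check{\rho}$. For the second map, an analogous computation, using non-degeneracy of $\psi$, shows that the Fourier transform intertwines the $(g_1,g_2)$-action with its swap $(g_2,g_1)$; combining this with $\mathcal{Z}(\cdot,\check{\rho})$ and then taking transpose (which converts $\check{\rho}$ back into $\rho$) produces a second $G \times G$-morphism $f \mapsto {}^t\mathcal{Z}(\hat f,\check\rho)$ into the same irreducible target $\rho \boxtimes \check{\rho}$.

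Next I would decompose $\C(\M_n(\Oh_k))$ into the orbit subspaces $\C(\mathcal{O})$ attached to the $G \times G$-orbits $\mathcal{O}$ on $\M_n(\Oh_k)$, which are parameterised by the Smith normal form, i.e.\ by the type $\mu$ of the cokernel. The open orbit is $G$ itself, and the Peter--Weyl decomposition $\C(G) \cong \bigoplus_\pi \pi \boxtimes \check{\pi}$ contributes exactly one copy of $\rho \boxtimes \check{\rho}$. Each non-open orbit $\mathcal{O}_\mu$ gives rise to a function space $\C(\mathcal{O}_\mu)$ obtained by induction from the trivial character of the stabiliser of a chosen representative, and this stabiliser is built out of the automorphism groups $G_{\mu'}$ with $\mu' < k^n$. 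The crucial step, and the one I expect to be the main obstacle, is to identify, via a Mackey-type analysis, the irreducible left-$G$ constituents of these orbit modules with representations lying in the image of the infinitesimal induction functors defined in Section~\ref{sec:functors}; this is where the geometry of the orbit stratification of $\M_n(\Oh_k)$ is matched with the representation-theoretic notion appearing in the hypothesis on $\rho$.

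Granting this identification, the assumption that $\rho$ is not contained in any infinitesimally induced representation forces the $\rho \boxtimes \check{\rho}$-isotypic component of $\C(\M_n(\Oh_k))$ to coincide with that of $\C(G)$, and hence to be a single irreducible $G \times G$-module. Schur's lemma applied to the two morphisms above then yields a unique scalar $\varepsilon(\rho,\psi) \in \C$ with ${}^t\mathcal{Z}(\hat f,\check\rho) = \varepsilon(\rho,\psi)\,\mathcal{Z}(f,\rho)$ for every $f \in \C(\M_n(\Oh_k))$, which is the desired functional equation. The dependence of $\varepsilon(\rho,\psi)$ on the normalisation of $\hat f$ and on $\psi$ then comes out of the explicit Schur-lemma constant, but is not needed for existence.
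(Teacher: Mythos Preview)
Your approach is correct but genuinely different from the paper's. The paper argues directly and computationally: it introduces the auxiliary operator $\mathcal{W}(\rho,\psi;x)=|M|^{-1/2}\sum_{g\in G}\psi_k(\tr(gx))\rho(g)$, observes that $\mathcal{Z}(f,\rho)=\sum_{x\in M}\hat f(-x)\mathcal{W}(\rho,\psi;x)$, and then proves in Proposition~\ref{vanishing} that $\mathcal{W}(\rho,\psi;x)=0$ whenever $x\in M\setminus G$. That vanishing is obtained by noting $\mathcal{W}(\rho,\psi;x)=\mathcal{W}(\rho,\psi;x)\rho(e_{H_x})$ for the left stabiliser $H_x=\{g:gx=x\}$, and that for non-invertible $x$ the group $H_x$ contains a conjugate of $U_{(k^{n-1},k-1)\hookrightarrow k^n}$, so $V^{H_x}=0$ under the hypothesis. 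The functional equation then follows in two lines from the identities $\hat{\hat f}(-x)=f(x)$ and $\mathcal{W}(\check\rho,\psi;g)=\mathcal{W}(\check\rho,\psi;1)\check\rho(g)^{-1}$, with $\varepsilon(\rho,\psi)$ defined explicitly by $\mathcal{W}(\check\rho,\psi;1)=\varepsilon(\rho,\psi)\cdot\mathrm{id}$.

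Your Schur-lemma route and the paper's vanishing argument rest on the same core fact: the left stabiliser in $G$ of any non-invertible matrix contains an infinitesimal unipotent radical. In your language this is exactly what makes the non-open orbit pieces $\C(\mathcal O_\mu)$ free of $\rho\boxtimes\check\rho$ (via Frobenius reciprocity and the inclusion $H_{d_\mu}\times\{e\}\subset\Stab_{G\times G}(d_\mu)$), so your ``main obstacle'' reduces to precisely the stabiliser computation of Proposition~\ref{vanishing}. What you gain is a conceptual explanation of why the infinitesimal-induction hypothesis is the right one, in the Godement--Jacquet spirit; what the paper gains is brevity and an explicit formula for $\varepsilon(\rho,\psi)$ without invoking Peter--Weyl or the full orbit decomposition.
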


\subsection{Acknowledgments} The second author is grateful to Alex Lubotzky and Yakov Varshavsky for supporting this research. The third author acknowledges M.~K.~Vemuri for some very helpful discussions on Heisenberg groups. The second and third authors thank Shahar Mendelson, Amnon Neeman and the Australian National University in Canberra for giving them an opportunity to work together.
The fourth author thanks S.~Stevens for many helpful discussions which were instrumental for parts of the present work.
The authors thank Robert Kottwitz and Dipendra Prasad, who read a draft of this article and provided some very valuable feedback.
They are grateful to the referee for his careful reading of the article and valuable comments.


\section{Notations and preliminaries}
\label{sec:generalities}
\subsection{Automorphism groups}
\label{sec:some-groups}
Let $\Lambda$ denote the set of all partitions of all positive integers. Any $\lambda \in \Lambda$ can be written in the form  $(\lambda_1^{r_1},\ldots,\lambda_l^{r_l})$, where $\lambda_1>\cdots>\lambda_l$ and $r_1,\ldots,r_l$ are positive integers. The sum $r_1+\cdots +r_l$ will be called the \emph{length} of the partition, and $\lambda_1$ will be called the \emph{height} of the partition.

Every finitely generated torsion $\Oh$-module is of the form $\Oh_\lambda=\Oh_{\lambda_1}^{r_1}\oplus\cdots\oplus\Oh_{\lambda_l}^{r_l}$
for some $\lambda \in \Lambda$ of height no more than $k$. Consider the group $G_\lambda=\Aut_{\Oh}(\Oh_\lambda)$. In particular, taking $\lambda=(k^n)$, we have $G_{k^n}=\GL_n(\Oh_k)$.
When it is necessary to specify the underlying non-Archimedean local field $F$, the notation $G_{\lambda,F}$ will be used for $G_\lambda$. 

Let $N_r$ denote the kernel of the natural map $G_{k^n}\to G_{r^n}$.
Then, if $r\geq k/2$, the map $\M_n(\Oh_{k-r})\to N_r$ defined by
$A\mapsto \I+\p^r A$, is an isomorphism of groups (it is a bijection of sets for all $r<k$). This results in a short exact sequence
  \begin{equation}\label{eq:ses}
        0 \to \M_n(\Oh_{k-r}) \to G_{k^n} \to  G_{r^n} \to 1,
  \end{equation}
for every $r\geq k/2$.
In what follows, we identify $M_n(\Oh_{k-r})$ with its image in $G_{k^n}$ for $r\geq k/2$.

\subsection{Similarity classes associated to representations}
\label{sec:simil-class-assoc}
Assume that $r\geq k/2$.
The action of $G_{k^n}$ on its normal subgroup $\M_n(\Oh_{k-r})$ factors through $G_{(k-r)^n}$.
In fact, this is just the usual action by similarity transformations
\begin{equation*}
  g\cdot A=gAg\inv, \quad g\in G_{(k-r)^n},\: A\in \M_n(\Oh_{k-r}).
\end{equation*}
It results in an action of $G_{(k-r)^n}$ on the set of all characters of $\M_n(\Oh_{k-r})$.

Now suppose that $\rho$ is an irreducible representation of $G_{k^n}$ on a vector space $V$.
The restriction of $\rho$ to $\M_n(\Oh_{k-r})$ gives rise to a decomposition $V=\oplus V_\chi$, where $\chi$ ranges over the set of characters of $\M_n(\Oh_{k-r})$.
Clifford theory then tells us that the set of characters $\chi$ for which $V_\chi$ is non-trivial consists of a single orbit for the action of $G_{(k-r)^n}$ on the characters of $\M_n(\Oh_{k-r})$.

The group $\M_n(\Oh_{k-r})$ can be identified with its Pontryagin dual (as a $G_{(k-r)^n}$-space).
For this, pick an additive character $\psi$ of $F\to \C^\times$ whose restriction to $\Oh$ is trivial, but whose restriction to $\pp^{-1}$ is non-trivial.
For each $A\in \M_n(\Oh_{k-r})$, define a character $\psi_A\colon\M_n(\Oh_{k-r})\to \C^\times$ by $\psi_A(B)=\psi(\p^{r-k}\tr(AB))$.
The map $A\mapsto \psi_A$ identifies $\M_n(\Oh_{k-r})$ with its Pontryagin dual, and preserves the action of $G_{(k-r)^n}$.

Thus we associate, for each $r\geq k/2$, to each irreducible representation $\rho$ of $G_{k^n}$, a similarity class $\Omega_{k-r}(\rho)\subset \M_n(\Oh_{k-r})$.


\section{Induction and restriction functors}
\label{sec:functors}
This section introduces the functors that will play the role of parabolic induction and restriction in the context of $\GL_n(\Oh_k)$.
They were introduced in \cite[Section~2]{ranktwo}.
Geometric induction is an obvious analog of parabolic induction in the case of a field.
Infinitesimal induction has no analog in that setting.

\subsection{Geometric induction and restriction functors}
\label{subsec:geometric}
Given a direct sum decomposition
$\Oh_k^n=\Oh_k^{n_1}\oplus \Oh_k^{n_2}$, define $P_{n_1,n_2}$ to be the subgroup of $G_{k^n}$ which preserves $\Oh_k^{n_1}$.
There is a natural surjection $\varphi\colon P_{n_1,n_2}\to G_{k^{n_1}}\times G_{k^{n_2}}$.
Denote the kernel by $U_{n_1,n_2}$.
Define the functor $i_{n_1,n_2}\colon\Rep{G_{k^{n_1}}}\times \Rep{G_{k^{n_2}}}\to \Rep{G_{k^n}}$ taking representations $\sigma_1$ and $\sigma_2$ of $G_{k^{n_1}}$ and $G_{k^{n_2}}$ respectively to the induction to $G_{k^n}$ of the pull-back under $\varphi$ of $\sigma_1\otimes \sigma_2$.
The functor $r_{n_1,n_2}\colon\Rep{G_{k^n}}\to \Rep{G_{k^{n_1}}}\times \Rep{G_{k^{n_2}}}$ is defined by restricting a representation $\rho$ of $G_{k^n}$ to $P_{n_1,n_2}$ and then taking the invariants under $U_{n_1,n_2}$.
By Frobenius reciprocity, these functors form an adjoint pair:
\begin{equation*}
  \Hom_{G_{k^n}}(\rho,i_{n_1,n_2}(\sigma_1,\sigma_2))=\Hom_{G_{k^{n_1}}\times G_{k^{n_2}}}(r_{n_1,n_2}(\rho),\sigma_1\otimes\sigma_2).
\end{equation*}
Following \cite{ranktwo}, the functors $i_{n_1,n_2}$ and $r_{n_1,n_2}$ are called \emph{geometric induction} and \emph{geometric restriction} functors, respectively.
Furthermore
\begin{defn}
  An irreducible representation of $G_{k^n}$ will be said to lie in the geometrically induced series if it is isomorphic to a subrepresentation of $i_{n_1,n_n}(\sigma_1,\sigma_2)$ for some decomposition $n=n_1+n_2$ with $n_1$ and $n_2$ strictly positive, and some representations $\sigma_1$ and $\sigma_2$ of $G_{k^{n_1}}$ and $G_{k^{n_2}}$ respectively.
\end{defn}

\subsection{Infinitesimal induction and restriction functors}
\label{subsec:infinitesimal}
For two partitions $\lambda$ and $\mu$, say that $\lambda\leq \mu$ if there exists an embedding of $\Oh_\lambda$ in $\Oh_\mu$ as an $\Oh$-module.
This is equivalent to the existence of a surjective $\Oh$-module morphism $\Oh_\mu\to \Oh_\lambda$.
If $\lambda\leq k^n$, then the pair $(\lambda,k^n)$ has the \emph{unique embedding} and \emph{unique quotient} properties, i.e., all embeddings of $\Oh_\lambda$ in $\Oh_{k^n}$ and all surjections of $\Oh_{k^n}$ onto $\Oh_\lambda$ lie in the same $G_{k^n}$-orbit. As a consequence the functors that are defined below will, up to isomorphism, not depend on the choices of embeddings and surjections involved (in the language of \cite[Section 2]{MR2283434}, $k^n$ is a \emph{symmetric type}).

Given $\lambda \leq k^n$, take the obvious embedding of $\Oh_\lambda$ in $\Oh_k^n$ given on standard basis vectors by $\vec f_i\mapsto \pi^{k-\lambda_{h(i)}}\vec e_i$, where $h(i)$ is such that $r_1+\cdots +r_{h(i)-1}<i\leq r_1+\cdots + r_{h(i)}$.
Define
\begin{equation*}
  P_\linkn = \{ g\in G_{k^n}\;|\: g\cdot \Oh_\lambda = \Oh_\lambda\},
\end{equation*}
Restriction to $\Oh_\lambda$ gives rise to a homomorphism $P_\linkn\to G_\lambda$ which, due to the unique embedding property, is surjective.
Let $U_\linkn$ be the kernel.
One may now define an induction functor $i_\linkn\colon\Rep{G_\lambda}\to \Rep{G_{k^n}}$ as follows: given a representation of $G_\lambda$, pull it back to a representation of $P_\linkn$ via the homomorphism $P_\linkn\to G_\lambda$, and then induce to $G_{k^n}$.
Its adjoint functor $r_\linkn\colon\Rep{G_{k^n}}\to \Rep{G_\lambda}$ is obtained by taking a representation of $G_{k^n}$, restricting to $P_\linkn$, and taking the vectors invariant under $U_\linkn$.
The adjointness is a version of Frobenius reciprocity: there is a natural isomorphism
\begin{equation*}
  \Hom_{G_{k^n}}\left(\rho, i_\linkn (\sigma)\right)=\Hom_{G_\lambda}\left(r_\linkn (\rho), \sigma \right)
\end{equation*}
for representations $\rho$ and $\sigma$ of $G_{k^n}$ and $G_\lambda$ respectively.
In terms of matrices, the groups $P_\linkn$ and $U_\linkn$ are
\begin{gather*}
  P_\linkn = \{ (a_{ij})\in G_{k^n}\;|\:a_{ij}\in \pi^{\min\{0,\lambda_{h(j)}-\lambda_{h(i)}\}}\Oh_k\},\\
  U_\linkn = \{ (a_{ij})\in P_\linkn\;|\:a_{ij}\in \delta_{ij}+\pi^{\lambda_{h(j)}}\Oh_k\}.
\end{gather*}

Dually, fix the surjection of $\Oh_k^n$ onto $\Oh_\lambda$ given by $\vec e_i\mapsto \vec f_i$ and define
\begin{equation*}
  P_\lonkn = \{ g\in G_{k^n}\;|\: g \cdot \ker(\Oh_k^n\to \Oh_\lambda)=\ker(\Oh_k^n\to \Oh_\lambda)\}.
\end{equation*}
Taking the induced map on the quotient gives rise to a homomorphism $P_\lonkn\to G_\lambda$ which, by the unique quotient property, is surjective.
Let $U_\lonkn$ denote the kernel.
An adjoint pair of functors $i_\lonkn\colon\Rep{G_\lambda}\to \Rep{G_{k^n}}$ and $r_\lonkn\colon \Rep{G_{k^n}}\to \Rep{G_\lambda}$ are defined exactly as before.
$P_\lonkn$ is conjugate to $P_{\embed{\lambda'}{k^n}}$ and $U_\lonkn$ is conjugate to $U_{\embed{\lambda'}{k^n}}$, where $\lambda'$ is the partition that is complementary to $\lambda$ in $k^n$, i.e., the partition for which $\ker(\Oh_k^n\to \Oh_\lambda)\cong \Oh_{\lambda'}$. 
Therefore, the collection of irreducible representations obtained as summands after applying either of the functors $i_{\linkn}$ or $i_{\lonkn}$ is the same. 
Following \cite{ranktwo}, the functors $i_{\linkn}$ and $i_{\lonkn}$ are called \emph{infinitesimal induction functors}.
The functors $r_\linkn$ and $r_\lonkn$ are called \emph{infinitesimal restriction functors}. 
\begin{defn}
  An irreducible representation of $G_{k^n}$ will be said to lie in the infinitesimally induced series if it is isomorphic to a subrepresentation of $i_\linkn \sigma$ for some partition $\lambda\leq k^n$ and some representation $\sigma$ of $G_\lambda$.
\end{defn}


\section{Cuspidality and strong cuspidality}
\label{sec:cuspidal}
\subsection{The definitions of cuspidality}
\label{sec:defns}
Recall from Section~\ref{sec:simil-class-assoc} that to every irreducible representation $\rho$ of $G_{k^n}$ is associated a similarity class $\Omega_1(\rho)\subset \M_n(\Oh_1)$. The following definition was introduced in \cite{MR592296} for $n=2$ and in \cite{MR760676} for general $n$.
\begin{defn}
  [Strong cuspidality]
  \label{defn:Vcuspidality}
  An irreducible representation $\rho$ of $G_{k^n}$ is said to be \emph{strongly cuspidal} if either $k=1$ and $\rho$ is cuspidal, or $k>1$ and $\Omega_1(\rho)$ is an irreducible orbit in $\M_n(\Oh_1)$.
\end{defn}
In the above definition, one says that an orbit is irreducible if the matrices in it are irreducible, i.e., they do not leave any non-trivial proper subspaces of $\Oh_1^n$ invariant.
This is equivalent to saying that the characteristic polynomial of any matrix in the orbit is irreducible.

Another notion of cuspidality (which applies for any $G_{\lambda}$, however, we shall focus on $\lambda=k^n$) picks out those irreducible representations which can not be constructed from the representations of $G_\lambda$, $\lambda \leq k^n$ by using the functors defined in Section \ref{sec:functors}. 
\begin{defn}
  [Cuspidality]
  \label{defn:cuspidality}
  An irreducible representation $\rho$ of $G_{k^n}$ is said to be \emph{cuspidal} if no twist of it by a linear character lies in the geometrically or infinitesimally induced series.
\end{defn}

\subsection{Comparison between the definitions}
\label{sec:comparison}
\begin{theorem}
  \label{theorem:equivalence}
  Every strongly cuspidal representation is cuspidal.
  When $n$ is a prime, every cuspidal representation is strongly cuspidal.
\end{theorem}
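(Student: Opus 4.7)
The plan is to analyse both implications via Clifford theory applied to the abelian normal subgroup $N_{k-1} \simeq \M_n(\Oh_1)$ of $G_{k^n}$, using the pairing $B \mapsto \psi_B$ introduced in Section~\ref{sec:simil-class-assoc}. The case $k=1$ is trivial because $N_{k-1}$ collapses and the two notions both specialise to Green's classical cuspidality, so assume $k \geq 2$. A preliminary observation is that any linear character $\chi$ of $G_{k^n}$ factors through $\det$, and expanding $\det(\I + \p^{k-1}A) \equiv 1 + \p^{k-1}\tr(A) \pmod{\p^k}$ shows that $\Omega_1(\chi \otimes \rho) = \Omega_1(\rho) + c\I$ for some scalar $c = c(\chi) \in \Oh_1$. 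Thus the property that $\Omega_1$ is an irreducible similarity class is invariant under twisting, and to prove the first implication it is enough to rule out that $\rho$ itself embeds in the geometrically or infinitesimally induced series.

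For the first implication I would argue by contradiction. Suppose $\rho$ is a subrepresentation of $i_{n_1,n_2}(\sigma_1, \sigma_2)$ or of $i_{\linkn}(\sigma)$ with $\lambda < k^n$, and let $U$ denote the corresponding unipotent radical. By Frobenius reciprocity $\rho$ admits a nonzero $U$-fixed vector. Setting $L := U \cap N_{k-1}$, the Clifford decomposition $\rho|_{N_{k-1}} = \bigoplus_{B \in \Omega_1(\rho)} V_{\psi_B}$ forces some $B \in \Omega_1(\rho)$ to lie in the Pontryagin annihilator $L^{\perp} \subset \M_n(\Oh_1)$ under the pairing $(B, A) \mapsto \psi(\p^{-1}\tr(BA))$. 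A direct matrix computation based on the formulae of Section~\ref{sec:functors} identifies $L^{\perp}$ in the geometric case with the block upper-triangular matrices of block sizes $n_1, n_2$, all of which are reducible, and in the infinitesimal case with the matrices whose $j$-th row vanishes for every $j$ with $\lambda_{h(j)} < k$; since $\lambda < k^n$, at least one such row is forced to be zero, whence the image of $B$ lies in a proper invariant subspace and $B$ is again reducible. Either way $B$ is reducible, contradicting strong cuspidality.

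For the converse I would argue the contrapositive: starting from $\rho$ with $\Omega_1(\rho)$ reducible, I exhibit a twist in the induced series. If $\Omega_1(\rho) = \{c\I\}$ is a scalar orbit, choose $\chi$ with $\Omega_1(\chi) = \{c\I\}$; the twist $\chi^{-1} \otimes \rho$ is then trivial on $N_{k-1}$, hence descends to $G_{(k-1)^n}$, realising it as a subrepresentation of $i_{\embed{(k-1)^n}{k^n}}(\bar\rho)$. Otherwise $\Omega_1(\rho)$ contains a non-scalar $B$ whose characteristic polynomial is reducible of prime degree $n$; the only possibilities are a product of two coprime factors (giving a $B$-stable decomposition $\Oh_1^n = V_1 \oplus V_2$) or $(x-a)^n$ with $B - a\I$ nonzero nilpotent (giving a proper $B$-invariant flag). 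In either case one chooses a basis realising this invariant subspace, lifts it to a direct summand $\tilde W \subset \Oh_k^n$ with stabiliser $P = P_{n_1, n_2}$, and observes that $\mathrm{Cent}_{\GL_n(\Oh_1)}(B)$ lies in the corresponding Levi. Clifford theory presents $\rho$ as $\Ind{\Stab(\psi_B)}{G_{k^n}}\,\tilde\rho$, with $\Stab(\psi_B) \subset P$; induction-in-stages then exhibits $\rho$ as a subrepresentation of an induction from $P$, and verifying that $U_{n_1,n_2}$ acts trivially on the relevant piece identifies this as the geometric induction $i_{n_1, n_2}$.

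The main obstacle is precisely this last verification: passing from the abstract Clifford presentation $\rho = \Ind{\Stab(\psi_B)}{G_{k^n}}\,\tilde\rho$ to an honest geometric or infinitesimal induction requires a careful analysis of how $\tilde\rho$ behaves under $U_{n_1,n_2}$, and it is here that the primality of $n$ is essential. For composite $n$ the explicit construction in Section~\ref{sec:alex} produces cuspidal representations of $\GL_4(\Oh_2)$ that fail to be strongly cuspidal, so any such reduction must genuinely break down without that hypothesis. I would model this analysis on Carayol's treatment of \emph{repr\'esentations tr\`es cuspidales} for $\GL_n(F)$ with $n$ prime, combined if necessary with an iterated Clifford analysis up the filtration by the subgroups $N_r$ for $r \geq \lceil k/2 \rceil$.
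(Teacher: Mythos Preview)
Your argument for the first implication is correct and essentially matches the paper: twisting by a linear character shifts $\Omega_1(\rho)$ by a scalar, and the existence of a nonzero $U$-fixed vector forces $\Omega_1(\rho)$ to meet the annihilator $L^\perp$, whose elements are visibly reducible. No issue there.

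The converse direction, however, contains a genuine error. You claim that Clifford theory on $N_{k-1}$ gives $\rho = \Ind{\Stab(\psi_B)}{G_{k^n}}\tilde\rho$ with $\Stab(\psi_B) \subset P_{n_1,n_2}$. But the action of $G_{k^n}$ on characters of $N_{k-1}$ factors through $G_{1^n}$, so $\Stab(\psi_B)$ is the \emph{full preimage} in $G_{k^n}$ of $C_{\GL_n(\Oh_1)}(B)$. In particular it contains the entire congruence kernel $N_1 = \ker(G_{k^n}\to G_{1^n})$, and $N_1$ is not contained in $P_{n_1,n_2}$ (its elements $\I+\p A$ have arbitrary lower-left block). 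So the inclusion $\Stab(\psi_B)\subset P$ fails, and the induction-in-stages argument cannot even be set up. This is not merely the ``last verification'' you flag as an obstacle; the structural step preceding it already breaks down.

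The paper's route is substantially different. For the non-primary case (characteristic polynomial with two coprime factors), it does not attempt to realise $\rho$ as induced from a parabolic in one stroke. Instead it introduces an interpolating chain $P_{n_1,n_2}=P_k\subset\cdots\subset P_0=G_{k^n}$ with increasing ``unipotent'' pieces $U_i$, and proves inductively that $\rho^{U_{i+1}}\neq 0$ given $\rho^{U_i}\neq 0$. The inductive step hinges on showing that the linear map $c\mapsto c\hat w_1-\hat w_2 c$ on $M_{n_2\times n_1}(\Oh_1)$ is surjective, which is exactly where coprimality of the characteristic polynomials of $\hat w_1$ and $\hat w_2$ is used. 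For the primary case $(x-a)^n$, the paper does not attempt geometric induction at all: after twisting so that $a=0$, the Jordan form of $B$ has a zero row, and one checks directly that $\rho$ has a nonzero vector fixed by $U_{(k^{n-1},k-1)\hookrightarrow k^n}$, placing it in the \emph{infinitesimally} induced series. Your proposal conflates these two mechanisms.
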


\begin{proof} Let $\rho$ be an irreducible non-cuspidal representation of $G_{k^n}$. The linear characters of $G_{k^n}$ are of the form $\det \!\circ \chi$ for some character $\chi\colon \Oh_k^{\times} \to \C^{\times}$. Using the identification of $N_{k-1} \simeq \M_n(\Oh_1)$ with its dual from Section \ref{sec:simil-class-assoc}, the restriction of $\det \!\circ \chi$ to $N_{k-1}$ is easily seen to be a scalar matrix.
Thus $\rho$ is strongly cuspidal if and only if $\rho(\chi)=\rho \otimes \det \!\circ \chi$ is, since adding a scalar matrix does not effect the irreducibility of the orbit $\Omega_1(\rho)$. Since $\rho$ is non-cuspidal, there exists a character $\chi$ such that $\rho(\chi)^U$ is nonzero for some $U=U_{n_1,n_2}$ or $U=U_{\lambda \hookrightarrow k^n}$. In either case this implies that the orbit $\Omega_1\left(\rho(\chi)\right)$ is reducible which in turn implies that $\rho(\chi)$ and hence $\rho$ are not strongly cuspidal.

For the converse the following interesting result (for which the hypothesis that $n$ is prime is not necessary) plays an important role.
A similar result was obtained by Kutzko in the context of supercuspidal representations of $\GL_n$ over a $p$-adic field \cite[Prop.~4.6]{MR0808102}.
Call a similarity class in $\M_n(\Oh_1)$ \emph{primary} if its characteristic polynomial has a unique irreducible factor.
\begin{prop}
  \label{prop:non-monatomic}
  Let $\rho$ be an irreducible representation of $G_{k^n}$.
  If $\Omega_1(\rho)$ is not primary then $\rho$ lies in the geometrically induced series.
\end{prop}
\begin{proof}
  If $\Omega_1(\rho)$ is not primary then it contains an element $\varphi=\left(\begin{smallmatrix} \hat{w}_1 & 0 \\0  & \hat{w}_2 \end{smallmatrix}\right)$ with $\hat{w}_i \in \M_{n_i}(\Oh_1)$ and $n=n_1+n_2$, such that the characteristic polynomials of $\hat{w}_1$ and $\hat{w}_2$ have no common factor.
  It will be shown that $r_{n_1,n_2}(\rho)\neq 0$.

  In what follows, matrices will be partitioned into blocks according to $n=n_1+n_2$.
  Let $P_i=P_{(k^{n_1},(k-i)^{n_2}) \hookrightarrow k^n}$ for $i=0,\ldots,k$.
  Then $P_i$ consists of matrices in $G_{k^n}$ with blocks of the form $\left(\begin{smallmatrix} a & b \\ \p^i c & d\end{smallmatrix}\right)$.
  Let $U_i$ be the normal subgroup of $P_i$ consisting of block matrices of the form $\left(\begin{smallmatrix} \I & \p^{k-i}u\\ 0 & \I\end{smallmatrix}\right)$. The $P_i$'s form a decreasing sequence of subgroups, while the $U_i$'s form increasing sequences. Given a representation $\rho_i$ of $P_i/U_i$ define $r_i(\rho_i)$ to be the representation of $P_{i+1}/U_{i+1}$ obtained by taking the vectors in the restriction of $\rho_i$ to $P_{i+1}$ that are invariant under $U_{i+1}$. That is,
  \[
r_i\colon\text{Rep}(P_i/U_i) \to \text{Rep}(P_{i+1}/U_{i+1}), \qquad  {r}_i(\rho_i) = \text{Inv}_{U_{i+1}/U_{i}} \circ \text{Res}^{P_i/U_i}_{P_{i+1}/U_{i}}(\rho_i).
\]
In particular, $P_k=P_{n_1,n_2}$ and $U_k=U_{n_1,n_2}$.
Therefore, (see \cite[Lemma 7.1]{ranktwo}) we have that $r_{n_1,n_2}=r_{k-1}\circ \cdots \circ r_0$. We argue by induction that $r_{i}\circ \cdots \circ r_0 (\rho) \ne 0$ for all $i=0,\ldots,k$. If $i=0$, then since $\varphi \in \Omega_1(\rho)$, we get that $\rho_{|U_1}$ contains the trivial character of $U_1$, hence, $r_0(\rho) \ne 0$. Denote $\rho_i=r_{i-1}\circ \cdots \circ r_0 (\rho)$ and assume that $\rho_i \ne 0$.  In order to show that $r_i(\rho_i) \ne 0$, consider the normal subgroup $L_i$ of $P_i$ which consists of block matrices of the form $\I+\big(\begin{smallmatrix} \p^{k-1}w_1& \p^{k-i-1}u \\ \p^{k-1}v & \p^{k-1}w_2 \end{smallmatrix}\big)$. It is easily verified that $L_{i}/U_{i} \simeq \M_n(\Oh_1)$, the isomorphism given by
\[
  \eta\colon \I+\left( \begin{matrix} \p^{k-1}w_1 & \p^{k-i-1}u \\ \p^{k-1}v  & \p^{k-1}w_2  \end{matrix} \right) \mod {U_{i}} ~~ \mapsto ~~\left( \begin{matrix} w_1 & u \\ v  & w_2  \end{matrix} \right),
\]
where $w_1, w_2, u$ and $v$ are appropriate block matrices over $\Oh_1$. It follows that we can identify the dual of ${L_{i}/U_{i}}$ with $\M_n(\Oh_1)$: $\hat{x} \mapsto \psi_{\hat{x}} \circ \eta$, for $\hat{x} \in \M_n(\Oh_1)$.

The action of $P_i$ on the dual of $L_i/U_i$ is given by $\hat{x} \mapsto g\hat{x}$ where $\psi_{g\hat{x}}(\eta(l)) =\psi_{\hat{x}}(\eta(g^{-1}l g))$. We shall not need the general action of elements of $P_i$, but rather of a small subgroup which is much easier to handle. If
\[
g_c=\left( \begin{matrix} \I &  \\ \p^{i}c  & \I  \end{matrix} \right), \qquad \eta(l)= \left( \begin{matrix} w_1 & u \\ v& w_2  \end{matrix} \right), \qquad \hat{x}= \left( \begin{matrix} \hat{w}_1 & \hat{v} \\ \hat{u}& \hat{w}_2  \end{matrix} \right),
\]
then unraveling definitions gives
\begin{equation}\label{dualaction}
\hat{x} \mapsto g_c \hat{x} = \left( \begin{matrix} \hat{w}_1 &  \hat{v} \\ \hat{u}+c\hat w_1-\hat w_2 c & \hat{w}_2  \end{matrix} \right).
\end{equation}
As we have identifications $L_0/U_1=\cdots =L_i/U_{i+1}$ we infer that the restriction of $\rho_i$ to $L_i/U_{i+1}$ contains a character
\[
\psi_{\hat{x}}=(\varphi_{|L_0/U_1},\hat{u})\colon L_i/U_{i+1} \times U_{i+1}/U_i = L_i/U_i \to \C^{\times},
\]
that is, $\psi_{\hat{x}}$ corresponds to $\hat{x}=\left( \begin{smallmatrix} \hat{w}_1 & 0 \\ \hat{u}& \hat{w}_2  \end{smallmatrix} \right)$. We claim that there exist $g_c$ such that
\[
g_c\hat{x}= \left( \begin{matrix} \hat{w}_1 & 0 \\ 0 & \hat{w}_2  \end{matrix} \right),
\]
therefore $\rho_{i|U_{i+1}/U_{i}}$ contains the trivial character of $U_{i+1}/U_{i}$ and hence $r_i(\rho_i) \ne 0$.

Indeed, using  \eqref{dualaction} it is enough to show that the map $c \mapsto c\hat{w}_1-\hat{w}_2 c$ is surjective, hence $\hat{u}$ can be eliminated and the entry $(1,2)$ contains the trivial character. This map is surjective if and only if it is injective. So we show that its kernel is null. A matrix $c$ is in the kernel if and only if
\begin{equation}\label{comm}
c \hat{w}_1= \hat{w}_2 c.
\end{equation}
Let $p_i$ ($i=1,2$) be the characteristic polynomials of $\hat{w}_i$. Our assumption on the orbits is that $p_1$ and $p_2$ have disjoint set of roots. Using \eqref{comm} we deduce that
\[
c p_1(\hat{w}_1)=p_1(\hat{w}_2)c.
\]
By the Cayley-Hamilton theorem the left hand side of the above equation vanishes. Over an algebraic closure of $\Oh_1$, $p_1(t)=\prod (t-\alpha_j)$, where the $\alpha_j$ are the roots of $p_1$. The hypothesis on $\hat{w}_1$ and $\hat{w}_2$ implies that none of these is an eigenvalue of $\hat{w}_2$. Therefore, $\hat{w}_2-\alpha_j$ is invertible for each $j$. It follows that $p_1(\hat{w}_2)=\prod (\hat{w}_2-\alpha_j)$ is also invertible, hence $c=0$. This completes the proof of the proposition.

\end{proof}

Returning now to the proof of Theorem~\ref{theorem:equivalence}, assume that $\rho$ is not strongly cuspidal. There are two possibilities:

 \begin{enumerate}

 \item [(a)] Any element $\hat{\omega} \in \Omega_1(\rho)$ has eigenvalue in $\Oh_1$. In such case, by twisting with a one-dimensional character $\chi$, we get a row of zeros in the Jordan canonical form of $\hat{\omega}$. Therefore, $\rho(\chi)$ is contained in a representation infinitesimally induced from $G_{(k^{n-1},k-1)}$.

 \item [(b)] Elements in $\Omega_1(\rho)$ have no eigenvalue in $\Oh_1$. Since $n$ is prime and since $\Omega_1(\rho)$ is reducible, the latter cannot be primary, and Proposition \ref{prop:non-monatomic} implies that $\rho$ lies in the geometrically induced series.
\end{enumerate}
Thus, $\rho$ is non-cuspidal.

\end{proof}





\section{Construction of strongly cuspidal representations}
\label{sec:parameterization}

The construction of strongly cuspidal representations of $\GL_n(\Oh_k)$ when $k>1$ can be found, for example, in \cite{MR0233931,MR0396859,MR0492087,MR760676, MR1204652,MR1311772}. 
In this section, we recall this construction in a way that Theorems~B and C are seen to follow from it.

\subsection{Primitive characters}
\label{sec:primitive characters}
Let $E$ denote an unramified extension of $F$ of degree $n$.
Let $\OO$ be the integral closure of $\Oh$ in $E$.
The maximal ideal of $\OO$ is $\PP=\p\OO$.
Let $\OO_k=\OO/\PP^k$.
As an $\Oh_k$-module, $\OO_k$ is isomorphic to a free $\Oh_k$-module of rank $n$.
Therefore, $G_{k^n}$ can be identified with $\Aut_{\Oh_k}(\OO_k)$.
This identification is determined up to an inner automorphism of $G_{k^n}$.
Thus, the strongly cuspidal representations constructed in this section are determined up to isomorphism.

Left multiplication by elements of $\OO_k$ gives rise to $\Oh_k$-module endomorphisms of $\OO_k$.
Therefore, $\tok$ can be thought of as a subgroup of $G_{k^n}$.
Similarly, for each $r\geq k/2$, $\OO_{k-r}$ will be thought of as a subring of $\M_n(\Oh_{k-r})$.

 Strongly cuspidal representations of $G_{k^n}$ will be associated to certain characters of $\tok$ which we will call \emph{strongly primitive}.
In order to define a strongly primitive character of $\tok$ it is first necessary to define a primitive character of $\OO_1$.
\begin{defn}
  [Primitive character of $\OO_1$]
  \label{defn:primitive-field}
  A \emph{primitive character of $\OO_1$} is a homomorphism $\phi\colon\OO_1\to \C^\times$ which does not factor through any proper subfield via the trace map.
\end{defn}
The map $\OO_k\to \tok$ given by $a\mapsto 1+\p^ra$ induces an isomorphism $\OO_{k-r}\tilde\to \ker(\tok\to \OO_r^\times)$, for each $1\leq r<k$.
\begin{defn}
  [Strongly primitive character of $\tok$]
  \label{defn:primitive-tok}
  When $k>1$, a \emph{strongly primitive character of $\tok$} is a homomorphism $\omega\colon\tok\to \C^\times$ whose restriction to $\ker(\tok\to \OO_{k-1}^\times)$ is a primitive character when thought of as a character of $\OO_1$ under the above identification.
\end{defn}
The above definition does not depend on the choice of uniformizing element $\p\in \pp$.

\noindent Suppose that $r\geq k/2$.
An identification $A\mapsto \psi_A$ of $\M_n(\Oh_{k-r})$ with its Pontryagin dual was constructed in Section~\ref{sec:simil-class-assoc}.
Given $a\in \OO_{k-r}$, view it as an element of $\M_n(\Oh_{k-r})$.
Let $\phi_a$ denote the restriction of $\psi_a$ to $\OO_{k-r}$.
Then $a\mapsto \phi_a$ is an isomorphism of $\OO_{k-r}$ with its Pontryagin dual.
\subsection{Construction of strongly cuspidal representations from strongly primitive characters}
The reader may find it helpful to refer to (\ref{eq:construction}) while navigating the construction.
\label{sec:construction}
Let $l=\lceil k/2 \rceil$ be the smallest integer not less than $k/2$ and $l'=\lfloor k/2 \rfloor$ be the largest integer not greater than $k/2$.
Let $\omega$ be a strongly primitive character of $\tok$.
Let $a\in \tOh_{k-l}$ be such that the restriction of $\omega$ to $N_l\cap \tok$ (when identified with $\tOh_{k-l}$) is of the form $\phi_a$.
The strong primitivity of $\omega$ implies that the image of $a$ in $\OO_1$ does not lie in any proper subfield.
The formula
\begin{equation}
  \label{eq:sigmaomega}
  \tau_\omega(xu)=\psi_a(x)\omega(u) \text{ for all } x\in N_l \text{ and } u\in \tok,
\end{equation}
defines a homomorphism $\tau_\omega\colon N_l\tok\to \C^\times$. 
Let $L$ denote the kernel of the natural map 
$\tOh_k^{\times} \to \tOh_1^{\times}$. Then $N_lL$ is a normal subgroup of 
$N_{l'}\tok$ (note that $N_l\tok$ is not normal in $N_{l'}\tok$i, when $k$
is odd).
Let $\sigma_\omega$ denote the restriction of $\tau_\omega$ to $N_lL$. 
We have
  \begin{equation}
    \label{eq:stable}
    \sigma_\omega(yxy\inv)=\sigma_\omega(x)\text{ for all } y\in N_{l'}\tok \text{ and } x\in N_lL.
  \end{equation}
Let $q$ denote the order and $p$ denote the characteristic of $\Oh_1$.
The quotient $V=N_{l'}L/N_lL$ is naturally isomorphic to $\M_n(\Oh_1)/\OO_1$ which\footnote{Here $\M_n(\Oh_1)$ is identified with $\mathrm{End}_{\Oh_1}(\OO_1)$.}, being an abelian group where every non-trivial element has order $p$, can be viewed as a vector space over $\F_p$ of dimension $(n^2-n)\log_pq$.
Then 
  \begin{equation*}
    \beta(xN_{l}L,yN_{l}L)=\sigma_\omega([x,y]) \text{ for all } x, y\in N_{l'}L,
  \end{equation*}
  defines a non-degenerate alternating bilinear form 
$\beta\colon V\times V\to \mu_p$ , where $\mu_p$ denote the complex
$p^{\text{th}}$ roots of unity, \cite[Corollary~4.3]{MR1334228}.

The following lemma now follows from standard results on the representation
theory of finite Heisenberg groups (see e.g.,
\cite[Proposition~3]{MR0396859}).

\begin{lemma}
  \label{lemma:ltolprime}
  There exists a unique irreducible representation $\sigma'_\omega$ of $N_{l'}L$ whose restriction to $N_lL$ is $\sigma_\omega$ isotypic.
  This representation has dimension $q^{(l-l')(n^2-n)/2}$.
  Its character is given by
  \begin{equation*}
    \tr(\sigma'_\omega(x))=\begin{cases} q^{(l-l')(n^2-n)/2}\sigma_\omega(x) & \text{if } x\in N_lL,\\ 0 & \text{otherwise}.\end{cases}
  \end{equation*}
\end{lemma}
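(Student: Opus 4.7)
The plan is to reduce the claim to the standard representation theory of finite Heisenberg groups. If $k$ is even then $l=l'$, so $N_{l'}L=N_lL$ and one simply takes $\sigma'_\omega=\sigma_\omega$; the claimed dimension $q^{0}=1$ and the character formula then hold trivially. I therefore concentrate on the case $k$ odd, in which $l-l'=1$ and $|V|=q^{n^2-n}$.

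First I would check that the stability property \eqref{eq:stable} makes $K:=\ker\sigma_\omega\subset N_lL$ normal in $N_{l'}L$, so that $H:=N_{l'}L/K$ is a well-defined finite group in which $Z:=N_lL/K$ is central. The commutator map on $H$ factors through a map $V\times V\to Z$ which, composed with the injection $\sigma_\omega\colon Z\hookrightarrow\C^\times$, is precisely $\beta$. Non-degeneracy of $\beta$ then gives two things at once: $Z$ is the full centre of $H$, and the restriction of $\sigma_\omega$ to $[H,H]$ is faithful, hence in particular non-trivial. Thus $H$, together with $\sigma_\omega$ viewed as a character of its centre, is a finite Heisenberg group in the classical sense.

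The Stone--von Neumann theorem for finite Heisenberg groups (see \cite[Proposition~3]{MR0396859}) then produces a unique, up to isomorphism, irreducible representation $\bar\sigma'_\omega$ of $H$ with central character $\sigma_\omega$, of dimension $\sqrt{[H:Z]}=\sqrt{|V|}=q^{(n^2-n)/2}$. Inflating along $N_{l'}L\twoheadrightarrow H$ yields the desired representation $\sigma'_\omega$. Conversely, any irreducible representation of $N_{l'}L$ whose restriction to $N_lL$ is $\sigma_\omega$-isotypic must contain $K$ in its kernel, so uniqueness for $\sigma'_\omega$ follows from uniqueness for $\bar\sigma'_\omega$.

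The character formula on $N_lL$ is immediate: by construction the restriction there is $q^{(n^2-n)/2}\cdot\sigma_\omega$. For $x\in N_{l'}L\setminus N_lL$, non-degeneracy of $\beta$ supplies $y\in N_{l'}L$ with $\sigma_\omega([y,x])\neq 1$; since $[y,x]\in N_lL$ acts on $\sigma'_\omega$ as the scalar $\sigma_\omega([y,x])$, the identity
\[
\tr\sigma'_\omega(x)=\tr\sigma'_\omega(yxy^{-1})=\sigma_\omega([y,x])\,\tr\sigma'_\omega(x)
\]
forces $\tr\sigma'_\omega(x)=0$. The main obstacle is purely one of bookkeeping, namely translating the given data $(V,\beta,\sigma_\omega)$ cleanly into a Heisenberg group equipped with a faithful central character; once that translation is in place, all substantive content comes from Stone--von Neumann.
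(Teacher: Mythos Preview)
Your proposal is correct and follows exactly the approach indicated in the paper: the paper does not give a proof beyond the sentence ``follows from standard results on the representation theory of finite Heisenberg groups (see e.g., \cite[Proposition~3]{MR0396859}),'' and what you have written is precisely a careful unpacking of that reference, including the translation of the data $(V,\beta,\sigma_\omega)$ into a Heisenberg group with faithful central character and the standard vanishing-of-trace argument off the centre.
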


Recall from \cite[II.4, Proposition~8]{MR0354618}, that there is a unique multiplicative section $s\colon \OO_1^\times\to \tok$.
This allows us to realize $N_{l'}\tok$ as a semidirect product of $N_{l'}L$ by $\OO_1^\times$. 
Recall also, that $x\in \tok$ is called \emph{\re} if its image in $\OO_1^\times$ is not contained in any proper subfield.
\begin{lemma}
  \label{lemma:intertwiner}
  When $k$ is odd, there exists an irreducible representation $\tau'_\omega$ of $N_{l'}\tok$, which is unique up to isomorphism, whose restriction to $N_lL$ is $\sigma_\omega$-isotypic, and such that for any $x\in N_{l'}\tok$,
  \begin{equation*}
    \tr(\tau'_\omega(x)) = 
    \begin{cases}
      0 \text{ when } x \text{ is not conjugate to an element of } N_l\tok\\
      (-1)^{n-1} \omega(x) \text{ when } x\in \tok \text{ is \re.}
    \end{cases}
  \end{equation*}
\end{lemma}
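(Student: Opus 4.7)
The plan is to extend $\sigma'_\omega$ from $N_{l'}L$ up to $N_{l'}\tok$ along the semidirect product decomposition furnished by the Teichm\"uller section, and then to pin down the correct extension by normalizing the character on regular elliptic elements.

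First I would establish that the isomorphism class of $\sigma'_\omega$ is stable under the conjugation action of $N_{l'}\tok$ on $N_{l'}L$. The relation (\ref{eq:stable}) shows that $\sigma_\omega$ is invariant under this action, and the uniqueness clause of Lemma \ref{lemma:ltolprime} then forces $\sigma'_\omega$ itself to be stable. Since $N_{l'}\tok/N_{l'}L\cong \OO_1^\times$ is cyclic, its Schur multiplier is trivial, so $\sigma'_\omega$ admits an honest (not merely projective) extension $\tilde\tau$ to $N_{l'}\tok$. Any other extension then has the form $\tilde\tau\otimes(\chi\circ\pi)$ for a character $\chi$ of $\OO_1^\times$, where $\pi\colon N_{l'}\tok\to \OO_1^\times$ is the natural projection, giving exactly $|\OO_1^\times|$ candidates.

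The next step is to single out the unique extension with the prescribed character values. Each candidate, restricted to $N_l\tok$, is $\sigma_\omega$-isotypic on $N_lL$ and therefore decomposes as a direct sum of characters of the form $\tau_\omega\otimes \chi'$; tensoring the ambient extension by $\chi$ shifts this multiset of characters by $\chi$. For a regular elliptic $x\in\tok$ we have $\tau_\omega(x)=\omega(x)$, so the trace of each candidate at $x$ is $\omega(x)$ times a sum of values $\chi'(\bar x)$. By character orthogonality for the cyclic group $\OO_1^\times$, distinct extensions produce distinct traces at any fixed regular element, which yields uniqueness provided one can realize \emph{some} extension with the value $(-1)^{n-1}\omega(x)$.

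For the existence of the correctly normalized extension and the vanishing statement, I would realize $\tau'_\omega$ via a Weil-type construction: the image $s(\OO_1^\times)$ acts on the symplectic $\F_p$-space $V=N_{l'}L/N_lL$ preserving $\beta$, embedding $\OO_1^\times$ as an anisotropic maximal torus of $\mathrm{Sp}(V)$, and the Heisenberg-Weil formalism produces a canonical extension of $\sigma'_\omega$ to $N_{l'}\tok$. The vanishing of $\mathrm{tr}(\sigma'_\omega)$ off $N_lL$ from Lemma \ref{lemma:ltolprime}, together with the vanishing of the Weil-representation character on cosets of $s(\OO_1^\times)N_{l'}L$ not meeting $N_l\tok$, delivers the vanishing clause. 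The main obstacle is the sign $(-1)^{n-1}$: this requires a careful Gauss-sum computation for the Weil representation of the anisotropic torus $\OO_1^\times\hookrightarrow \mathrm{Sp}(V)$ of rank $n-1$, or equivalently a direct Mackey analysis of $\mathrm{Ind}_{N_l\tok}^{N_{l'}\tok}\tau_\omega$ identifying the constituent whose restriction to $N_{l'}L$ contains $\sigma_\omega$; both routes deposit exactly the same sign arising from the non-split character of the torus.
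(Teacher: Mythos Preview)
Your plan is essentially the paper's own: both arguments reduce to the Heisenberg--Weil picture for the quotient $V=N_{l'}L/N_lL$ with its symplectic form $\beta$, and then extend across the Teichm\"uller section $s\colon \OO_1^\times\to \tok$. The paper does not carry out the Gauss-sum or Mackey computation you outline; instead it invokes G\'erardin's Theorem~1 directly, and pins down the sign by counting the Frobenius orbits on the weights of the anisotropic torus $T$ (with $T(\Fq)=\OO_1^\times$) acting on $V\otimes\F_{q^n}$: the Frobenius acts as a Coxeter element on the $\GL_n$ root system, giving one symmetric orbit when $n$ is even and none when $n$ is odd, and each symmetric orbit contributes a factor $-1$ --- hence $(-1)^{n-1}$ in either parity. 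The regularity hypothesis on $u$ guarantees it is regular semisimple with no vanishing weight, so G\'erardin's formula applies.

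The one genuine soft spot in your write-up is exactly this sign: you assert that ``both routes deposit exactly the same sign arising from the non-split character of the torus'' without computing it, and this is the entire content of the lemma beyond the standard extension argument. Your uniqueness argument is also slightly loose: a single regular elliptic $x$ need not generate $\OO_1^\times$ as a \emph{group}, so ``distinct extensions produce distinct traces at any fixed regular element'' is not quite right; what is true is that a nontrivial character of $\OO_1^\times$ cannot be trivial on \emph{all} field generators, which suffices. If you want to avoid citing G\'erardin, the cleanest route to the sign is precisely the symmetric-orbit count above, rather than a raw Gauss-sum evaluation.
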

\begin{proof}
  The lemma is easily deduced from \cite[Theorem~1]{MR0396859} as follows:
  the algebraic torus $T$ defined over $\Fq$ such that $T(\Fq)=\OO_1^\times$ splits over the extension $\F_{q^n}$ of $\Fq$.
  The Galois group of this extension acts on the weights of $T(\F_{q^n})$ on $V\otimes \F_{q^n}$, which simply correspond to roots of $\GL_n$.
  The Frobenius automorphism which generates this group acts as a Coxeter element on this root system.
  One may see that,  in the language of \cite[1.4.9(b)]{MR0396859}, this action has a unique symmetric orbit and $(n-2)/2$ non-symmetric orbits if $n$ is even, and no symmetric orbits and $(n-1)/2$ non-symmetric orbits if $n$ is odd.
  The symmetric orbits contribute a factor of $(-1)$ to the character values.
  The hypothesis that $u$ is not an element of any proper subfield of $\OO_1$ implies that $u$ is regular semisimple, and that no weight vanishes on it.
\end{proof}
When $k$ is even, define the representation $\tau'_\omega$ of $N_{l'}\tok$ to be just $\tau_\omega$ (see (\ref{eq:sigmaomega})).
Then, for any $k>1$, if $u\in \tok$ is an element whose image in $\OO_1^\times$ is a generates $\OO_1^\times$, we have
\begin{equation}
\label{eq:cuspidaltau}
  \tr(\tau'_\omega(u))=(-1)^{k(n-1)}\omega(u).
\end{equation}
Finally, define
\begin{equation*}
  \rho_\omega=\Ind{N_{l'}\tok}{G_{k^n}}\tau'_\omega.
\end{equation*}
This will be the strongly cuspidal representation associated to the strongly primitive character $\omega$ of $\tok$.
The representation $\rho_\omega$ is irreducible because $N_{l'}\tok$ is the centralizer of $\sigma_\omega$ in $G_{k^n}$.

The steps in the construction of $\rho_\omega$ are described schematically below for the convenience of the reader.
The diagram on the left describes the relation between the various groups involved.
The position occupied by a group in the diagram on the left is occupied by the corresponding representation that appears in the construction in the diagram on the right.

\begin{equation}
  \label{eq:construction}
  \xymatrix{ & & G_{k^n}\ar@{-}[d] &                        \qquad  & & \rho_\omega\ar@{-}[d] &  \\
    & & N_{l'}\tok\ar@{-}[dl]\ar@{-}[d] &                   \qquad & & \tau'_{\omega} \ar@{-}[dl]\ar@{-}[d] &   \\
    & N_{l'}L\ar@{-}[d]&  N_{l}\tok\ar@{-}[dr]\ar@{-}[dl] & \qquad  & \sigma'_{\omega}\ar@{-}[d]&  \tau_{\omega}\ar@{-}[dr]\ar@{-}[dl] & \\
    & N_lL \ar@{-}[dr]\ar@{-}[dl] &  & \tok\ar@{-}[dl]      \qquad  & \sigma_\omega \ar@{-}[dr]\ar@{-}[dl] &  & \omega \ar@{-}[dl]    \\
    N_l\ar@{-}[dr] & & L\ar@{-}[dl] &                       \qquad  \psi_a \ar@{-}[dr] & & \omega_{|L}\ar@{-}[dl] &       \\
    & N_l\cap L &  &                                        \qquad   &  \phi_a &  &
     }
\end{equation}
\begin{theorem}
  \label{theorem:character}
  For each strongly primitive character $\omega$ of $\tok$, $\rho_\omega$ is an irreducible representation such that
  \begin{enumerate}
  \item $\tr(\rho_\omega(g)) = 0$ if $g$ is not conjugate to an element of $N_l\tok$.
  \item if $u\in \tok$ is such that its image in $\OO_1^\times$ is not contained in any proper subfield, then
    \begin{equation*}
      \tr(\rho_\omega(u)) = (-1)^{k(n-1)}\sum_{\gamma\in \Gal(E/F)} \omega({}^\gamma u)).
    \end{equation*}
    for every $u\in \tok$, whose image in $\OO_1^\times$ lies in no proper subfield.
  \end{enumerate}
  \begin{proof}
    The first assertion follows from Lemma~\ref{lemma:ltolprime}.
    The second follows from the fact that the intersection of the conjugacy class of $u$ in $G_{k^n}$ with $\tok$ consists only of the elements ${}^\gamma u$, for $\gamma\in \Gal(E/F)$.
  \end{proof}
\end{theorem}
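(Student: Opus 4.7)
The plan is to apply the Frobenius character formula to $\rho_\omega=\Ind{N_{l'}\tok}{G_{k^n}}\tau'_\omega$ and combine it with the explicit description of the character of $\tau'_\omega$ given by Lemma~\ref{lemma:intertwiner} together with equation~(\ref{eq:cuspidaltau}).

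For assertion (1), I would write
\[
\tr(\rho_\omega(g))=\frac{1}{|N_{l'}\tok|}\sum_{\substack{x\in G_{k^n}\\ x\inv gx\in N_{l'}\tok}}\tr\bigl(\tau'_\omega(x\inv gx)\bigr).
\]
Lemma~\ref{lemma:intertwiner} (and, trivially, the definition of $\tau_\omega$ when $k$ is even, in which case $N_l\tok=N_{l'}\tok$) says that $\tr(\tau'_\omega)$ vanishes on elements of $N_{l'}\tok$ that are not $N_{l'}\tok$-conjugate to any element of $N_l\tok$. Since $N_{l'}\tok$-conjugacy is a refinement of $G_{k^n}$-conjugacy, the hypothesis on $g$ kills every summand.

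For (2), let $u$ be a \re element of $\tok$. The key step is to identify all $G_{k^n}$-conjugates $v=x\inv ux$ that land in $N_{l'}\tok$. I would first establish the following preliminary: if $\bar y\in\OO_{l'}^{\times}$ is $G_{l'^n}$-conjugate to the image $\bar u$ of $u$ in $\OO_{l'}^{\times}$, then $\bar y={}^\gamma\bar u$ for a unique $\gamma\in\Gal(E/F)$. This reduces, via Hensel's lemma in the unramified extension $\OO_{l'}/\Oh_{l'}$, to the classical residue-field statement together with the observation that two elements of $\OO_{l'}$ with the same characteristic polynomial over $\Oh_{l'}$ are Galois conjugate. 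Granted this, and using $C_{G_{k^n}}(u)=\tok$ (which follows from $\Oh_k[u]=\OO_k$), I can show that each relevant $x$ factors as $x=\tilde x\cdot z\cdot n$, where $\tilde x\in\tok$, $z\in G_{k^n}$ realizes a Galois element $\gamma$ (so $z\inv u z={}^\gamma u$), and $n\in N_{l'}$. Consequently $v=n\inv\cdot{}^\gamma u\cdot n$, and since this conjugation takes place inside $N_{l'}\tok$, equation~(\ref{eq:cuspidaltau}) yields
\[
\tr\bigl(\tau'_\omega(v)\bigr)=\tr\bigl(\tau'_\omega({}^\gamma u)\bigr)=(-1)^{k(n-1)}\omega({}^\gamma u).
\]

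The remaining counting is routine: different $\gamma$ give disjoint families of $v$'s (distinct already modulo $\pp$), and for fixed $\gamma$ the orbit $\{n\inv\cdot{}^\gamma u\cdot n:n\in N_{l'}\}$ has size $|N_{l'}|/|N_{l'}\cap\tok|=q^{(n^2-n)(k-l')}$. Each $v$ is hit by $|\tok|$ choices of $x$, while $|N_{l'}\tok|/|\tok|=q^{(n^2-n)(k-l')}$, so the factors cancel and the Frobenius sum collapses to $(-1)^{k(n-1)}\sum_{\gamma\in\Gal(E/F)}\omega({}^\gamma u)$. The main obstacle I anticipate is the preliminary Galois-orbit statement at level $l'$; the rest is bookkeeping.
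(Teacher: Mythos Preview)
Your argument is correct and follows the same route as the paper's: both assertions come from the Frobenius character formula for $\rho_\omega=\Ind{N_{l'}\tok}{G_{k^n}}\tau'_\omega$ combined with the vanishing and value statements for $\tr(\tau'_\omega)$ (Lemmas~\ref{lemma:ltolprime}, \ref{lemma:intertwiner}, and (\ref{eq:cuspidaltau})) and the identification of the conjugates of $u$ landing in the inducing subgroup with the Galois orbit. The paper's proof records only the key fact that the $G_{k^n}$-conjugacy class of $u$ meets $\tok$ precisely in $\{{}^\gamma u:\gamma\in\Gal(E/F)\}$ and leaves the Frobenius bookkeeping implicit; you have filled it in correctly, including the refinement that every conjugate in $N_{l'}\tok$ is already $N_{l'}$-conjugate to some ${}^\gamma u$ and the centralizer count that makes the normalizing factors cancel.
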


\subsection{The parameterization of strongly cuspidal representations of $G_{k^n}$}
\label{sec:Green}
The following is a detailed version of Theorem B.
\begin{theorem}\quad
  \begin{enumerate}
  \item For each strongly primitive character $\omega$ of $\tok$, the representation $\rho_\omega$ of $G_{k^n}$ is irreducible and strongly cuspidal.
  \item Every strongly cuspidal representation of $G_{k^n}$ is isomorphic to $\rho_\omega$ for some strongly primitive character $\omega$ of $\tok$.
  \item If $\omega'$ is another strongly primitive character of $\tok$, then $\rho_\omega$ is isomorphic to $\rho_{\omega'}$ if and only if $\omega'=\omega\circ\gamma$ for some $\gamma\in \Gal(E/F)$.
  \end{enumerate}
\end{theorem}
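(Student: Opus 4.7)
The three parts will be established in turn, all resting on Clifford theory for the normal subgroup $N_l\triangleleft G_{k^n}$ and on the uniqueness statements proved in Lemmas~\ref{lemma:ltolprime} and~\ref{lemma:intertwiner}.

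For (1), the first step is to identify the stabiliser in $G_{k^n}$ of the character $\psi_a$ of $N_l$. The conjugation action factors through $G_{(k-l)^n}$, where the stabiliser of $\psi_a$ is the centraliser of $a$. Strong primitivity of $\omega$ forces the image $\bar a\in\OO_1\subset\M_n(\Oh_1)$ to generate $\OO_1$ as a field extension of $\Oh_1$, whence its commutant in $\M_n(\Oh_1)$ is $\OO_1$, and a standard lifting argument extends this to the statement that the commutant of $a$ in $\M_n(\Oh_{k-l})$ is $\OO_{k-l}$. Taking units and pulling back to $G_{k^n}$ gives stabiliser $N_{k-l}\tok=N_{l'}\tok$. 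Combined with the uniqueness of $\tau'_\omega$ on this stabiliser, Mackey's irreducibility criterion yields that $\rho_\omega$ is irreducible. Strong cuspidality is then immediate: $\rho_\omega\vert_{N_{k-1}}$ contains $\psi_{\bar a}$ (the restriction of $\psi_a$ along $N_{k-1}\subset N_l$), so $\Omega_1(\rho_\omega)$ is the similarity class of $\bar a$, which is irreducible because the minimal polynomial of a field generator of $\OO_1/\Oh_1$ is irreducible of degree $n$.

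For (2), let $\rho$ be any strongly cuspidal representation. Its irreducible class $\Omega_1(\rho)$ has cyclic representatives, and after changing basis we may realise one such representative $\bar a$ inside the canonical copy of $\OO_1\subset\M_n(\Oh_1)$, then lift it to $a\in\OO_{k-l}$ appearing in the $G_{k^n}$-orbit of characters of $N_l$ occurring in $\rho$. By Clifford theory and the stabiliser computation above, $\rho=\Ind_{N_{l'}\tok}^{G_{k^n}}\tilde\rho$ for an irreducible $\tilde\rho$ with $\psi_a$-isotypic restriction to $N_l$. One now uses Lemma~\ref{lemma:ltolprime} to see that $\tilde\rho\vert_{N_{l'}L}$ must coincide with $\sigma'_\omega$ for a suitable $N_{l'}\tok$-stable extension $\sigma_\omega$ of $\psi_a\vert_{N_l\cap L}$, while the finite abelian quotient $N_{l'}\tok/N_{l'}L\cong\OO_1^{\times}$ singles out a character $\omega$ of $\tok$ whose restriction to $\tok\cap N_l\cong\OO_{k-l}$ is $\phi_a$. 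Irreducibility of $\bar a$ makes $\phi_a$ primitive, hence $\omega$ strongly primitive; the uniqueness in Lemma~\ref{lemma:intertwiner} then forces $\tilde\rho=\tau'_\omega$ and therefore $\rho=\rho_\omega$.

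For (3), Mackey's intertwining formula together with the uniqueness in (1) identifies an isomorphism $\rho_\omega\cong\rho_{\omega'}$ with the existence of $g\in G_{k^n}$ for which $g\cdot\psi_a=\psi_{a'}$ and whose action matches $\omega$ to $\omega'$ on $\tok$. Such a $g$ must conjugate the cyclic subring generated by $a$ to the one generated by $a'$; the normaliser of $\OO_{k-l}^{\times}$ inside $G_{(k-l)^n}$ is the semidirect product $\OO_{k-l}^{\times}\rtimes\Gal(E/F)$, the unramified instance of the standard Weyl-group computation. The outer action of a Galois element $\gamma$ sends the character $\omega$ to $\omega\circ\gamma$, giving precisely the equivalence claimed; the converse is immediate from the construction, since the Galois action visibly preserves the induced representation. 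The step that I expect to require the most bookkeeping is the end of (2): assembling the various uniqueness statements (the lift of $\bar a$, the Heisenberg extension from $N_lL$ to $N_{l'}L$, and the choice of torus character) into a single canonically-determined $\omega$ modulo $\Gal(E/F)$. This is the main obstacle because it combines Clifford theory for $N_l$ with the Heisenberg-group analysis underlying Lemma~\ref{lemma:ltolprime}.
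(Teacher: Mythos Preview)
Your argument follows the same Clifford-theoretic strategy as the paper, and for part~(1) the two proofs coincide. Two points deserve comment.

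In (2), you write ``then lift it to $a\in\OO_{k-l}$ appearing in the $G_{k^n}$-orbit,'' but this step requires justification: an arbitrary lift of $\bar a\in\OO_1$ to $\OO_{k-l}$ need not lie in $\Omega_{k-l}(\rho)$. The paper closes this by taking the characteristic polynomial $p(t)\in\Oh_{k-l}[t]$ of the orbit $\Omega_{k-l}(\rho)$, invoking Hensel's lemma to produce a root $a\in\OO_{k-l}$, and then noting that (since the reduction is irreducible, hence cyclic) both $a$ and any matrix in $\Omega_{k-l}(\rho)$ are conjugate to the companion matrix of $p(t)$ in $\GL_n(\Oh)$. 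This Hensel step is the only substantive omission in your sketch.

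For (3) the paper takes a different route: rather than computing a normaliser, it uses the character formula of Theorem~\ref{theorem:character} to see that if $\omega$ and $\omega'$ lie in distinct $\Gal(E/F)$-orbits then $\rho_\omega$ and $\rho_{\omega'}$ already have different traces on \re elements. Your normaliser argument is a valid alternative, but you should spell out the passage from $G_{(k-l)^n}$ back up to $G_{k^n}$: once the image $\bar g$ of an intertwining element is known to lie in $\OO_{k-l}^\times\rtimes\Gal(E/F)$, one still has to lift the Galois factor to $G_{k^n}$ (via the action of $\Gal(E/F)$ on $\OO_k$) and verify that the resulting $N_{l'}\tok$-coset acts on $\tau'_\omega$ through $\omega\mapsto\omega\circ\gamma$. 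The paper's route is shorter given that Theorem~\ref{theorem:character} is already available; yours is more structural and avoids that character computation.
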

\begin{proof}[Proof of (1)]
  The irreducibility of $\rho_\omega$ follows from standard results on induced representations.
  To see that $\rho_\omega$ is strongly cuspidal, observe that the restriction of $\rho_\omega$ to $N_l$ contains $\psi_a$.
  This means that its restriction to $N_{k-1}$ contains $\psi_{\overline{a}}$, where $\overline a$ is the image of $a$ in $\OO_1$.
  Since this image does not lie in any proper subfield, its minimal polynomial is irreducible of degree $n$.
  Therefore, as an element of $\M_n(\Oh_1)$, its characteristic polynomial must be irreducible.
\end{proof}
\begin{proof}[Proof of (2)]
  Suppose that $\rho$ is an irreducible strongly cuspidal representation of $G_{k^n}$.
  Unwinding the definitions, one see that $\Omega_1(\rho)$ is just the image of $\Omega_{k-l}(\rho)$ under the natural map $\M_n(\Oh_{k-l})\to \M_n(\Oh_1)$.
  Let $p(t)\in \Oh_{k-l}[t]$ be the characteristic polynomial of the matrices in $\Omega_{k-l}(\rho)$.
  Denote its image in $\Oh_1[t]$ by $\overline p(t)$.
  The hypothesis on $\rho$ implies that $\overline p (t)$ is irreducible.
  Let $\tilde p(t)$ be any polynomial in $\Oh[t]$ whose image in $\Oh_{k-l}[t]$ is $p(t)$.
  By Hensel's lemma, there is a bijection between the roots of $\tilde p(t)$ in $E$ and the roots of $\overline p (t)$ in $\OO_1$.
  Consequently,
  \begin{equation*}
    \Hom_F(F[t]/\tilde p(t), E) \cong \Hom_{\Oh_1}(\Oh_1[t]/\overline p(t),\OO_1).
  \end{equation*}
  But we know that $\OO_1$ is isomorphic to $\Oh_1[t]/\overline p(t)$.
  In fact there are exactly $n$ such isomorphisms.
  Each one of these gives an embedding of $F[t]/\tilde p(t)$ in $E$.
  Since both $F[t]/\tilde p(t)$ and $E$ have degree $n$, these embeddings must be isomorphisms.
  Any root $\tilde a$ of  $\tilde p(t)$ in $E$ also lies in $\OO$.
  It is conjugate to the companion matrix of $\tilde p (t)$ in $\GL_n(\Oh)$.
  Therefore, its image $a\in \OO_{k-l}$ lies in $\Omega_{k-l}(\rho)$.
  It follows that $\rho_{|N_l}$ contains a $\psi_a$ isotypic vector.

  By applying the little groups method of Wigner and Mackey to the normal subgroup $N_l$ of $G_{k^n}$, we see that every representation of $\rho_k$ whose restriction to $N_l$ has a $\psi_a$ isotypic vector is induced from an irreducible representation of $N_{l'}\tok$ whose restriction to $N_l$ is $\psi_a$ isotypic.
  It is not difficult then to verify (by counting extensions at each stage) that the construction of $\tau'_\omega$ in Section~\ref{sec:construction} gives all such representations.
\end{proof}
\begin{proof}[Proof of (3)]
  It follows from the proof of (2) that $\tau'_{\omega_1}$ and $\tau'_{\omega_2}$ are isomorphic if and only if $\omega_1=\omega_2$.
  The Galois group $\Gal(E/F)$ acts by inner automorphisms of $G_{k^n}$ (since we have identified it with $\Aut_{\Oh_k}(\OO_k)$) preserving $N_{l'}\tok$.
  Therefore, the restriction of $\rho_{\omega_1}$ to $N_{l'}\tok$ also contains $\tau_{\omega_2}$ whenever $\omega_2$ is in the $\Gal(E/F)$-orbit of $\omega_1$, hence $\rho_{\omega_1}$ is isomorphic to $\rho_{\omega_2}$. If $\omega_1$ and $\omega_2$ do not lie in the same $\Gal(E/F)$-orbit then Theorem~\ref{theorem:character} implies that that $\rho_{\omega_1}$ can not be isomorphic to $\rho_{\omega_2}$.
\end{proof}


\subsection{Connection with supercuspidal representations of $\GL_n(F)$}
In \cite[Theorem~8.4.1]{MR1204652}, Bushnell and Kutzko
proved that all the irreducible supercuspidal representations of
$\GL_n(F)$ can be obtained by compact induction from a compact subgroup
modulo the center.
One such subgroup is $F^\times\GL_n(\Oh)$.
This group is a product of $\GL_n(\Oh)$ with the infinite cyclic group
$Z_1$ generated by $\T \I$.
Thus every irreducible representation of this group is a product of a
character
of $Z_1$ with an irreducible representation of $\GL_n(\Oh)$.
An irreducible representation of $\GL_n(\Oh)$ is said to be of level $k-1$
if it
factors through $\GL_n(\Oh_k)$, but not through $\GL_n(\Oh_{k-1})$.
When $n$ is prime, the representations of $\GL_n(\Oh)$ which
give rise to supercuspidal representations are precisely those which are
of
level $k-1$, for some for $k>1$, and, when viewed as representations of
$\GL_n(\Oh_k)$, are strongly cuspidal.
For $k=1$, they are  just the cuspidal representations of $\GL_n(\Oh_1)$.
The corresponding representations of $Z\GL_n(\Oh)$ are called \emph{tr\`es
cuspidale de type $k$} by Carayol in \cite[Section~4.1]{MR760676}.
The construction that Carayol gives for these representations is the same
as
the one given here, except that the construction here is made canonical by
using
G\'erardin's results.

Let $\chi$ be any character of $Z_1$.
Set \[\pi_{\omega,\chi}:=\cInd{\GL_n(\Oh)
F^\times}{\GL_n(F)}(\rho_\omega\otimes \chi).\]
These are the supercuspidal representations of $\GL_n(F)$ associated to
$\rho_\omega$.

Let $r\colon \GL_n(\Oh)\to \GL_n(\Oh_k)$ denote the homomorphism obtained by
reduction modulo $\pp^k$.
In the notation of \cite{MR1204652}, we have
$r\inv(N_lL)=H^1(\beta,\fA)$, $r\inv(N_{l'}L)=J^1(\beta,\fA)$ and
$r\inv(N_{l'}\tok)=J(\beta,\fA)$, where $\fA=\M_n(\Oh)$ and
$\beta\in\M_n(F)$ is minimal (see \cite[(1.4.14)]{MR1204652}).
These groups are very special cases of the groups defined
in \cite[(3.1.14)]{MR1204652}.
The inflation $\eta$ of $\sigma_{\omega'}$ to $J^1(\beta,\fA)$ is a special case
of the Heisenberg representation defined in \cite[Prop.~5.1.1]{MR1204652}.

We will say that a supercuspidal representation $\pi$ of $\GL_n(F)$
belongs to the \emph{unramified series} if the field extension $F[\beta]$ of
$F$ is unramified (by \cite[(1.2.4), (6.2.3)~(i)]{MR1204652}, this is equivalent 
to saying that the $\Oh$-order $\fA$ occurring in the construction of $\pi$ is 
maximal). 
When $n$ is a prime number, Carayol has proved (see 
\cite[Theorem 8.1 (i)]{MR760676}) 
that the representations $\pi_{\omega,\chi}$ give all the supercuspidal 
representations of $\GL_n(F)$ which belong to the unramified series.
However, when $n$ is composite, the strongly cuspidal representations are not 
sufficient in order to build all the supercuspidal representations in the 
unramified series of $\GL_n(F)$ (see for instance Howe's construction in 
\cite{MR0492087}). 
Since all the supercuspidal representations of $\GL_n(F)$ are known \cite{MR1204652},
it would be natural to try restricting them to $\GL_n(\Oh)$ and see if one
get cuspidal representations among the components. On the other hand we 
observe that our notion of cuspidality is in a sense stronger
than the usual notion of supercuspidality for representations of
$\GL_n(F)$, since supercuspidality can only see geometric induction.


\section{Complexity of the classification problem}
\label{sec:complexity}
  In this section it will be shown that the representation theory of the family of groups $G_{k^n}$ actually involves the much larger family, $G_{\lambda,E}$ ($\lambda \in \Lambda$, $E/F$ unramified), which was defined in Section~\ref{sec:some-groups}, even when $k=2$.
\begin{theorem}
  \label{theorem:sensational}
  Let $F=\Fq((\p))$ be a local function field.
  Then the problems of constructing all the irreducible representations of the following groups are equivalent:
  \begin{enumerate}
  \item $G_{2^n,F}$ for all $n\in \N$.
  \item $G_{k^n,F}$ for all $k,n\in \N$.
  \item $G_{\lambda,E}$ for all partitions $\lambda$ and all unramified extensions $E$ of $F$.
  \end{enumerate}
\end{theorem}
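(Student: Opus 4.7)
The implications $(3) \Rightarrow (2) \Rightarrow (1)$ are immediate by specialisation: choosing $E = F$ and $\lambda = k^n$ in $(3)$ recovers $(2)$, and setting $k = 2$ in $(2)$ recovers $(1)$. For the reverse direction, it suffices to prove $(1) \Leftrightarrow (3)$, since then $(3) \Rightarrow (2)$ already closes up the circle.

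The key structural input, specific to the function field case $F = \Fq((\p))$, is the ring isomorphism
\[
  \Fq[t]/p(t)^{e} \;\cong\; \F_{q^{\deg p}}[s]/s^{e}
\]
valid for every irreducible $p(t) \in \Fq[t]$ and every $e \geq 1$, which fails in mixed characteristic. A direct consequence is that the centraliser in $\GL_{n}(\Fq)$ of a matrix $A \in \M_{n}(\Fq)$ decomposes as a direct product $\prod_{i} G_{\mu^{(i)}, E_{i}}$, where $E_{i}/F$ is the unramified extension whose residue degree equals the degree of the $i$-th distinct irreducible factor of the characteristic polynomial of $A$, and $\mu^{(i)}$ is the partition describing the primary decomposition of $A$ at that factor. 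By a direct construction with companion matrices of powers of a chosen irreducible polynomial, every pair $(\mu, E)$ consisting of a partition $\mu$ and an unramified extension $E/F$ arises as a single factor $G_{\mu, E}$ of such a centraliser.

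Now apply Clifford theory to the abelian normal subgroup $N_{1} = \M_{n}(\Fq) \subset G_{2^{n},F}$. Every irreducible representation of $G_{2^{n},F}$ is induced from the stabiliser of a character $\psi_{A}$ of $N_{1}$, and the stabiliser is an extension of $Z_{A} = C_{\GL_{n}(\Fq)}(A)$ by $N_{1}$. In the function field case the sequence~(\ref{eq:ses}) splits for $k = 2$ (via the Teichm\"uller-type section $\Fq \hookrightarrow \Fq[\p]/\p^{2}$), so this extension splits and the irreducible representations of the stabiliser extending $\psi_{A}$ are in bijection with the irreducible representations of $Z_{A}$. Combined with the preceding paragraph, the family of irreducible representations of all $G_{2^{n}, F}$ is in bijection with the family of pairs (conjugacy class $[A]$, irreducible representation of $Z_{A} = \prod_{i} G_{\mu^{(i)}, E_{i}}$), and, since representations of a direct product factor as tensor products, this is in turn in bijection with the family of irreducible representations of all $G_{\mu, E}$. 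This establishes $(1) \Leftrightarrow (3)$.

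The parallel argument with $N_{l} \subset G_{k^{n}, F}$ for general $k$ (where $l = \lceil k/2 \rceil$) gives $(2) \Leftrightarrow (3)$, using the same displayed isomorphism to decompose centralisers of elements of $\M_{n}(\Oh_{k-l})$. The principal obstacle is that the exact sequence (\ref{eq:ses}) splits only when $k = 2$; for $k > 2$ the Clifford analysis produces a Heisenberg-type projective factor (of the kind appearing in Lemma~\ref{lemma:ltolprime} and Lemma~\ref{lemma:intertwiner}) which must be shown to be determined by data within the same hierarchy, so that the identification of classification problems is genuine rather than tautological. Once this Heisenberg bookkeeping is handled, all three families classify the same list of representations up to a uniform and effective bijection.
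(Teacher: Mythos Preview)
Your core argument for $(1)\Rightarrow(3)$ is correct and is exactly the paper's route (via Theorem~\ref{theorem:complexity}): use the function-field splitting $G_{2^n}\cong \GL_n(\Fq)\ltimes M_n(\Fq)$, reduce by the Wigner--Mackey little-groups method to centralisers of matrices, identify these centralisers as products $\prod_i G_{\mu^{(i)},E_i}$ via the isomorphism $\Fq[t]/p(t)^e\cong \F_{q^{\deg p}}[s]/s^e$ (the paper proves this as Lemma~\ref{lemma:Hensel}), and realise any prescribed $G_{\mu,E}$ as a direct factor by a companion-matrix construction. One quibble: the word ``bijection'' in your penultimate paragraph is not literally right, since the same $G_{\mu,E}$ occurs as a centraliser factor for infinitely many $(n,A)$; but the theorem only asserts equivalence of classification problems, and for that your argument is sound.

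Your final paragraph should simply be deleted. You already observed that $(1)\Leftrightarrow(3)$ closes the circle, so a direct $(2)\Leftrightarrow(3)$ is redundant; the paper does not attempt it. More importantly, the sketch there has a deeper problem than the Heisenberg bookkeeping you flag: for $k\geq 4$ one has $k-l=\lfloor k/2\rfloor\geq 2$, so the relevant centralisers live inside $\GL_n(\Oh_{k-l})$ with $k-l\geq 2$, and these are \emph{not} products of groups $G_{\mu,E}$---they are themselves groups of the wild type the theorem is about. The displayed isomorphism over $\Fq$ has no analogue over $\Oh_m$ for $m\geq 2$ that would give such a clean factorisation.
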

\begin{proof}
  Obviously (3) implies (2), which implies (1).
  That (1) implies (3) follows from the somewhat more precise formulation in Theorem~\ref{theorem:complexity}.
\end{proof}
\begin{theorem}
  \label{theorem:complexity}
  Let $F$ be a local function field.
  Then the problem of constructing all the irreducible representations of $G_{2^n,F}$ is equivalent to the problem of constructing all the irreducible representations of all the groups $G_{\lambda,E}$, where $E$ ranges over all unramified extensions of $F$ of degree $d$ and $\lambda$ ranges over all partitions such that $d(\lambda_1r_1+\cdots+\lambda_lr_l)\leq n$.
\end{theorem}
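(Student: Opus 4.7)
The plan is to exploit the fact that in the local function field case, the reduction $\GL_n(\mathcal{O}_2)\twoheadrightarrow \GL_n(\mathbb{F}_q)$ splits: since $F=\mathbb{F}_q((\varpi))$, the constants $\mathbb{F}_q\hookrightarrow \mathcal{O}_2$ give a ring-theoretic section, and consequently
\[
G_{2^n,F} \;\cong\; \M_n(\mathbb{F}_q) \rtimes \GL_n(\mathbb{F}_q),
\]
where the right-hand factor acts on the (additive) left-hand factor by conjugation. This is the feature that makes the classification of irreducibles tractable and which is \emph{not} available in the mixed characteristic case.

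Having this semidirect product, I would apply the Wigner--Mackey little groups method. Using the trace pairing identification of $\widehat{\M_n(\mathbb{F}_q)}$ with $\M_n(\mathbb{F}_q)$ as in Section~\ref{sec:simil-class-assoc}, the $\GL_n(\mathbb{F}_q)$-orbits on $\widehat{\M_n(\mathbb{F}_q)}$ are in bijection with similarity classes in $\M_n(\mathbb{F}_q)$. Since $\M_n(\mathbb{F}_q)$ is abelian, every such character extends to its stabilizer, so the irreducible representations of $G_{2^n,F}$ are parametrised by pairs $(A,\sigma)$, where $A$ runs over representatives of similarity classes in $\M_n(\mathbb{F}_q)$ and $\sigma$ over irreducible representations of $\mathrm{Cent}_{\GL_n(\mathbb{F}_q)}(A)$, up to the appropriate equivalence. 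The key computation is then to identify these centralisers. Decompose $\mathbb{F}_q^n$ into primary components under $A$: for each irreducible $f\in\mathbb{F}_q[T]$ of degree $d_f$ dividing the characteristic polynomial of $A$, the $f$-primary part becomes a module over $\mathbb{F}_q[T]/(f^{\infty})\cong\OO_{E_f}$, where $E_f$ is the unramified extension of $F$ of degree $d_f$, and its isomorphism class is $\OO_{\lambda^{(f)}}$ for a unique partition $\lambda^{(f)}$. The centraliser therefore decomposes as
\[
\mathrm{Cent}_{\GL_n(\mathbb{F}_q)}(A) \;=\; \prod_{f} \Aut_{\OO_{E_f}}(\OO_{\lambda^{(f)}}) \;=\; \prod_f G_{\lambda^{(f)},E_f},
\]
with $\sum_f d_f|\lambda^{(f)}|=n$.

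The two directions of the equivalence now follow immediately. For (3)$\Rightarrow$(1): since irreducible representations of a direct product are external tensor products of irreducibles of the factors, knowledge of the irreducibles of every $G_{\lambda,E}$ with $d[\lambda]\leq n$ (where $d=[E:F]$ and $[\lambda]=\lambda_1r_1+\cdots+\lambda_lr_l$) determines the irreducibles of every centraliser appearing above, hence by Wigner--Mackey all irreducibles of $G_{2^n,F}$. For (1)$\Rightarrow$(3): given $(\lambda,E)$ with $d[\lambda]\leq n$, choose a matrix $A\in\M_{d[\lambda]}(\mathbb{F}_q)\subset\M_n(\mathbb{F}_q)$ whose $f$-primary invariant is $\lambda$ for a single irreducible $f$ of degree $d$ (completing to size $n$ by, e.g., zero on a complement) so that $G_{\lambda,E}$ appears as a direct factor of $\mathrm{Cent}(A)$; the Wigner--Mackey parametrisation of irreducibles of $G_{2^n,F}$ attached to such classes then exhibits the irreducibles of $G_{\lambda,E}$ as constituents.

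The only genuine step is the centraliser computation, and the bound $d[\lambda]\leq n$ is exactly the condition that $\OO_{\lambda,E}$ embeds as an $\mathcal{O}$-direct summand of $\mathbb{F}_q^n$, i.e. that such a primary block actually fits inside $\M_n(\mathbb{F}_q)$. The main point of difficulty—more conceptual than technical—is recognizing that the function field hypothesis is used crucially for the splitting of the reduction map; in the mixed characteristic case the extension is non-split and the representation theory of $G_{2^n,F}$ cannot be reduced to that of the $G_{\lambda,E}$ by this method. Beyond that, everything is standard Clifford theory together with the classification of similarity classes over a field via rational canonical form.
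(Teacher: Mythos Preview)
Your proposal is correct and follows essentially the same route as the paper: the semidirect product splitting available in the function field case, Wigner--Mackey, identification of matrix centralisers with products of $G_{\lambda,E}$'s via the primary decomposition, and an explicit block construction for the converse direction. The one point the paper treats more carefully is the ring isomorphism $\mathbb{F}_q[T]/f(T)^k \cong \mathbb{F}_{q^d}[u]/u^k$, which it proves by an explicit Hensel-type lifting (Lemma~\ref{lemma:Hensel}), whereas you invoke it as a known structural fact; also, for the converse your ``pad by zero'' works provided you choose $f$ with $f(0)\neq 0$, which is always possible---the paper instead pads with a single Jordan block $J_k(T-a)$ for $a$ not a root of $f$.
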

\begin{proof}
  When $F$ is a local function field, $G_{2^n}$ is isomorphic to the semidirect product of $\GL_n(\Oh_1)$ by $M_n(\Oh_1)$.
  The \emph{little groups method} of Wigner and Mackey (see e.g., \cite[Prop.~25]{MR0450380}) shows that constructing the irreducible representations of $G_{2^n}$ is equivalent to constructing the irreducible representations of the centralizers in $\GL_n(\Oh_1)$ of all the multiplicative characters of $M_n(\Oh_1)$.
  Pick any $\chi$ for which the space $V_\chi$ of $\chi$-isotypic vectors is non-zero.
  By the discussion in Section~\ref{sec:simil-class-assoc}, these subgroups of $\GL_n(\Oh_1)$ are the same as the centralizer groups of matrices.
We will see below that these centralizer groups are products of groups of the form $G_{\lambda,E}$ that appear in the statement of Theorem~\ref{theorem:complexity}.

  Let $A\in \M_n(\Oh_1)$.
  Then, $\Oh_1^n$ can be thought of as a $\Oh_1[\T]$-module where $\T$ acts through $A$.
  The centralizer of $A$ is the automorphism group of this $\Oh_1[\T]$-module.
  For each irreducible monic polynomial $f(\T)\in\Oh_1[\T]$  of degree $d$ which divides the characteristic polynomial of $A$, the $f$-primary part of this module is isomorphic to
  \begin{equation*}
    (\Oh_1[\T]/f(\T)^{\lambda_1})^{r_1}\oplus\cdots \oplus(\Oh_1[\T]/f(\T)^{\lambda_l})^{r_l},
  \end{equation*}
  for some partition $\lambda$.
  \begin{lemma}
    \label{lemma:Hensel}
    Let $\OO_1=\Oh_1[\T]/f(\T)$.
    The rings $\Oh_1[\T]/f(\T)^k$ and $\OO_1[u]/u^k$ are isomorphic for every $k>0$.
  \end{lemma}
  \begin{proof}
    It will be shown by induction that there exists a sequence $\{q_k(\T)\}$, in $\Oh_1[\T]$ such that
    \begin{enumerate}
    \item $q_1(\T)=\T$,
    \item $q_{k+1}(\T)\equiv q_k(\T)\mod f(\T)^k$ for all $k>0$, and,
    \item $f(q_k(\T))\in f(\T)^k$ for all $k>0$.
    \end{enumerate}
    For $k=1$ the result is obvious.
    Suppose that $q_k(\T)$ has been constructed.
    Since $\Oh_1$ is a perfect field and $f(\T)$ is irreducible, $f'(\T)$ is not identically $0$.
    It follows that $f'(\T)$ does not divide $f(\T)$.
    Since $q_k(\T)\equiv \T \mod f(\T)$, $f'(q_k(\T))$ does not divide $f(\T)$.
    Therefore, the congruence
    \begin{equation*}
      f(q_k(\T))+f(\T)^k h(\T) f'(q_k(\T)) \equiv 0 \mod f(\T)^{k+1}
    \end{equation*}
    can be solved for $h(\T)$.
    Let $h_0(\T)$ be a solution.
    Take $q_{k+1}(\T)=q_k(\T)+f(\T)^kh_0(\T)$.
    The sequence $\{q_k(\T)\}$ constructed in this manner has the required properties.

    Now note that $\OO_1[u]/u^k \cong \Oh_1[\T,u]/(f(\T),u^k)$.
    One may define a ring homomorphism
    \begin{equation*}
      \Oh_1[\T,u]/(f(\T),u^k)\to \Oh_1[\T]/f(\T)^k
    \end{equation*}
    by $\T\mapsto q_k(\T)$ and $u\mapsto f(\T)$.
    Since $q_k(\T)\equiv \T \mod f(\T)$, $\T$ lies in the image of this map, so it is surjective.
    As vector spaces over $\Oh_1$ both rings have dimension $kd$.
    Therefore, it is an isomorphism.
  \end{proof}
  It follows from Lemma~\ref{lemma:Hensel} that the automorphism group of the $f$-primary part of $\Oh_1^n$ is $G_{\lambda,E}$, where $E$ is an unramified extension of $F$ of degree $d$.
  The automorphism group of the $\Oh_1[\T]$-module $\Oh_1^n$ is the product of the automorphism groups of its $f$-primary parts.
  Therefore, the centralizer of $A$ in $G_{1^n}$ is a product of groups of the form $G_{\lambda,E}$.
  Considerations of dimension show that $d(\lambda_1r_1+\cdots+\lambda_lr_l)\leq n$ for each $G_{\lambda,E}$ that occurs.

  Conversely given $\lambda$ and $d$ satisfying the above inequality, take an irreducible polynomial $f(\T)\in \Oh_1[\T]$ of degree $d$.
  Define
  \begin{equation*}
    J_k(f)=\begin{pmatrix} C_f & 0 & 0 & \cdots & 0 & 0\\
    \I_d & C_f & 0 & \cdots & 0 & 0\\
    0 & \I_d & C_f & \cdots & 0 & 0 \\
    \vdots & \vdots & \vdots & \ddots & \vdots & \vdots\\
    0 & 0 & 0 & \cdots & C_f & 0\\
    0 & 0 & 0 & \cdots & \I_d & C_f
    \end{pmatrix}_{kd\times kd},
  \end{equation*}
  where $C_f$ is any matrix with characteristic polynomial $f$.
  Let
  \begin{equation*}
  A=J_{\lambda_1}(f)^{\oplus r_1}\oplus \cdots \oplus J_{\lambda_l}(f)^{\oplus r_l}\oplus J_k(\T-a),
  \end{equation*}
 where $a\in \Oh_1$ is chosen so that $\T-a\neq f(\T)$ and $k=n-d(\lambda_1r_1+\cdots+\lambda_lr_l)$.
The centralizer of $A$ contains $G_{\lambda,E}$ as a factor.
\end{proof}


\section{The zeta function associated to $G_{k^n}$}
\label{sec:zeta}
 In \cite{MR0396780}, Springer attaches a zeta function to irreducible representations of $\GL_n(\Oh_1)$, and proves that for cuspidal representations it satisfies a functional equation. Later on, Macdonald \cite{MI4} shows that a functional equation holds for any irreducible representation, provided that it has no $1$-component, namely, it is not contained in $i_{n-1,1}(\rho,1)$ for any representation $\rho$ of $\GL_{n-1}(\Oh_1)$. Moreover, Macdonald establishes a bijection between irreducible representations of $\GL_n(\Oh_1)$, and equivalence classes of tamely ramified representations of the Weil-Deligne group $W'_F$, which preserves certain $L$ and $\varepsilon$ factors.

In this section we attach a zeta function to any irreducible representation of $G_{k^n}$ and show that it satisfies a functional equation, provided that $\rho$ does not lie in the infinitesimally induced series. We follow closely \cite{MI4} and make the necessary adaptations.

\medskip

The map $F\to \C^\times$ given by $x\mapsto \psi(\pi^kx)$, when restricted to $\Oh$, factors through an additive character $\psi_k$ of $\Oh_k$, which does not factor through $\Oh_{k-1}$. Denote $G=G_{k^n}$ and $M=\M_{k^n}=\M_n(\Oh_{k})$, and let $\C(M)$ denote complex valued functions on $M$. For $f \in \C(M)$ define its Fourier transform by
\[
\hat{f}(x)=|M|^{-1/2}\sum_{y \in M}f(y)\psi_k\left(\tr(xy)\right),
\]
so that $\hat{\hat{f}}(x)=f(-x)$. Let $(\rho,V)$ be a finite dimensional representation of $G$. For each $f \in \C(M)$ define the zeta-function
\[
\mathcal{Z}(f,\rho)=\sum_{g\in G}f(g)\rho(g) \in \text{End}_{\C}(V).
\]
Also, for $x \in M$ let
\[
\mathcal{W}(\rho,\psi;x)=|M|^{-1/2}\sum_{g \in G}\psi_k(\tr(gx))\rho(g).
\]
The following lemma is straightforward.
\begin{lemma}\label{lem} \qquad
\begin{enumerate}
\item [(a)] $\mathcal{Z}(f,\rho)=\sum_{x \in M}\hat{f}(-x)\mathcal{W}(\rho,\psi;x)$.

\item [(b)] $\mathcal{W}(\rho,\psi;xg)=\rho(g)^{-1}\mathcal{W}(\rho,\psi;x)$.

\item [(c)] $\mathcal{W}(\rho,\psi;gx)=\mathcal{W}(\rho,\psi;x)\rho(g)^{-1}$.

\end{enumerate}
\end{lemma}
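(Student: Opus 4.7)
The plan is to prove the three identities by direct manipulation of the defining sums; nothing more than Fourier inversion, cyclicity of the trace on $M$, and a single change of variable in the group $G$ is needed in each case.

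For part (a), I would substitute the definition of $\mathcal{W}(\rho,\psi;x)$ into the right-hand side and interchange the two finite sums so that $\rho(g)$ can be pulled outside, reducing the claim to the scalar identity
\[
\sum_{x\in M}\hat f(-x)\,\psi_k(\tr(gx)) \;=\; |M|^{1/2}\,f(g) \qquad (g\in G).
\]
After the substitution $x\mapsto -x$ this is exactly the Fourier inversion formula $\hat{\hat f}(g)=f(-g)$ that is built into the normalisation $|M|^{-1/2}$ chosen in the definition of $\hat f$.

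For part (b), I would start from
\[
\mathcal{W}(\rho,\psi;xg) \;=\; |M|^{-1/2}\sum_{h\in G}\psi_k(\tr(hxg))\,\rho(h),
\]
use cyclicity of the trace to rewrite $\tr(hxg)=\tr((gh)x)$, and then substitute $h\mapsto g^{-1}h$ in the sum over $G$. The character factor becomes $\psi_k(\tr(hx))$, which is free of $g$, while $\rho(g^{-1}h)=\rho(g)^{-1}\rho(h)$, so $\rho(g)^{-1}$ can be pulled out to the left of the sum, yielding (b). Part (c) is handled analogously, and even more directly: since $\tr(hgx)=\tr((hg)x)$ already has the desired form, no cyclicity is required, and the substitution $h\mapsto hg^{-1}$ gives $\rho(hg^{-1})=\rho(h)\rho(g)^{-1}$, with $\rho(g)^{-1}$ emerging on the right of the sum.

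There is no genuine obstacle here; the only subtle point is to keep track of the order in which $\rho(g)^{-1}$ appears relative to $\mathcal{W}(\rho,\psi;x)$, which dictates the asymmetric choice of substitutions $h\mapsto g^{-1}h$ in (b) versus $h\mapsto hg^{-1}$ in (c).
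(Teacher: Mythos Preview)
Your proposal is correct and is exactly the direct computation the paper has in mind; the paper itself simply declares the lemma ``straightforward'' and gives no proof, so your argument via Fourier inversion for (a) and the substitutions $h\mapsto g^{-1}h$ and $h\mapsto hg^{-1}$ for (b) and (c) is precisely what is intended.
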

In particular, setting $x=1$ in parts (b)-(c) of Lemma \ref{lem} shows that $\mathcal W(\rho,\psi;1)$ commutes with $\rho(g)$ for all $g \in G$. Therefore, if $\rho$ is irreducible, then $\mathcal{W}(\rho,\psi;1)$ is a scalar multiple of $\rho(1)$. Following \cite{MI4} we write $\varepsilon(\rho,\psi)\rho(1)=\mathcal{W}(\check{\rho},\psi;1)$, where $\check\rho$ is the contragredient of $\rho$, i.e. $\check{\rho}(g)={^t\rho(g^{-1})}$ and $\varepsilon(\rho,\psi)$ is a complex number.
\begin{prop}\label{vanishing} Let $\rho$ be an irreducible representation of $G$ which does not lie in the infinitesimally induced series. Then $\mathcal{W}(\rho,\psi;x)=0$ for all $x \in M \smallsetminus G$.
\end{prop}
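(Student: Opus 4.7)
The plan is to fix $x \in M \smallsetminus G$ and exploit the transformation law of Lemma~\ref{lem}(b) with respect to the right stabilizer
\[
U(x) = \{u \in G : xu = x\}.
\]
For every $u \in U(x)$, Lemma~\ref{lem}(b) yields $\mathcal{W}(\rho,\psi;x) = \mathcal{W}(\rho,\psi;xu) = \rho(u)^{-1}\mathcal{W}(\rho,\psi;x)$, so $\rho(u)$ fixes every vector in the image of the endomorphism $\mathcal{W}(\rho,\psi;x) \in \mathrm{End}_{\C}(V)$. Thus it suffices to show that $V^{U(x)} = 0$.

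The key step is to identify $U(x)$ up to conjugacy with the unipotent radical $U_\lonkn$ of an infinitesimal parabolic with $\lambda < k^n$. Viewing $x$ as an $\Oh_k$-linear endomorphism of $\Oh_k^n$, the equation $xu = x$ is equivalent to $u$ acting trivially on the quotient $\Oh_k^n/\ker(x)$. Since $x \notin G$, the image $x(\Oh_k^n) \cong \Oh_k^n/\ker(x)$ is a proper submodule of $\Oh_k^n$, hence isomorphic to $\Oh_\lambda$ for some partition $\lambda < k^n$ in the sense of Section~\ref{subsec:infinitesimal}. By the unique quotient property of the pair $(\lambda,k^n)$, the canonical surjection $\Oh_k^n \twoheadrightarrow \Oh_k^n/\ker(x)$ lies in the $G$-orbit of the standard surjection used to define $P_\lonkn$; conjugation by a suitable $g \in G$ then gives $U(x) = g U_\lonkn g^{-1}$.

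To conclude, note that $V^{U_\lonkn}$ is precisely the space underlying $r_\lonkn(\rho)$. Since $\rho$ does not lie in the infinitesimally induced series, Frobenius reciprocity forces $r_\lonkn(\rho) = 0$ for every $\lambda < k^n$, so $V^{U_\lonkn} = 0$. Conjugation yields $V^{U(x)} = \rho(g) V^{U_\lonkn} = 0$, and therefore $\mathcal{W}(\rho,\psi;x) = 0$, as required. The main delicate point is the bookkeeping needed to match $U(x)$ precisely with $U_\lonkn$ (rather than $U_\linkn$) up to conjugacy: this comes down to identifying the quotient type with the correct partition $\lambda$ and invoking the unique quotient property. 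Once this identification is made, the remainder is a short Frobenius reciprocity argument.
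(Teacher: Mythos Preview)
Your argument is correct and follows essentially the same approach as the paper: exploit a transformation law from Lemma~\ref{lem} to show that $\mathcal{W}(\rho,\psi;x)$ is killed once the $\rho$-invariants under the stabilizer of $x$ vanish, and then identify that stabilizer with an infinitesimal unipotent radical. The only cosmetic differences are that the paper uses the \emph{left} stabilizer $H_x=\{g:gx=x\}$ together with Lemma~\ref{lem}(c), and rather than identifying each $H_x$ with a full $U_{\lambda\hookrightarrow k^n}$, it passes via Smith normal form to show that every $H_{d_\mu}$ contains the single smallest group $H_{d_\nu}=U_{(k^{n-1},k-1)\hookrightarrow k^n}$, so only the vanishing of invariants for this one $\lambda$ is needed.
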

\begin{proof} Let $H_x=\{ g \in G ~|~ gx=x \}$. For $g \in H_x$ we have
\[
\mathcal{W}(\rho,\psi;x)=\mathcal{W}(\rho,\psi;gx)=\mathcal{W}(\rho,\psi;x)\rho(g^{-1})=\mathcal{W}(\rho,\psi;x)\rho(e_{H_x}),
\]
where $\rho(e_{H_x})=|H_x|^{-1} \sum_{g\in H_x}\rho(g)$. Hence, it suffices to show that $\rho(e_{H_x})=0$ for $x \in M \smallsetminus G$. Since $\rho(e_{H_x})$ is the idempotent projecting $V$ onto $V^{H_x}$, it is enough to to show that the latter subspace is null. Let $\mu=(\mu_1,\ldots,\mu_n)$ be the divisor type of $x$. Namely, $0 \le \mu_1 \le \cdots \le \mu_n \le k$, such that acting with $G$ on the right and on the left gives: $gxh=d_{\mu}=\text{diag}(\varpi^{\mu_1},\ldots,\varpi^{\mu_n})$. Then $H_x=gH_{d_{\mu}}g^{-1}$. Now for any $\mu$ we have $H_{d_{\mu}} \supset H_{d_{\nu}}$, where $\nu=(0,0,\ldots,0,1)$. Therefore, it is enough to show that $V^{H_{d_\nu}}=(0)$. The subgroup $H_{d_{\nu}}$ is given explicitly by
\[
H_{d_{\nu}}=\left[\begin{matrix} \I_{n-1} &  \varpi^{k-1}\star \\ 0 &  1+\varpi^{k-1}\star \end{matrix}\right]=U_{(k^{n-1},k-1)\hookrightarrow k^n} \text{ (see Section \ref{subsec:infinitesimal})}.
\]
It follows that $V^{H_{d_\nu}}=(0)$ if $\rho$ does not lie in the infinitesimally induced series.
\end{proof}

\begin{theorem}\label{theorem:zeta} For all $f \in \C(M)$ and all irreducible representations $\rho$ of $G$ that do not lie in the infinitesimally induced series, we have
\[
{^t\mathcal{Z}}(\hat{f},\check{\rho})=\varepsilon(\rho,\psi) \mathcal{Z}(f,\rho).
\]

\end{theorem}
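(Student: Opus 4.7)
The plan is to follow the strategy used by Macdonald in \cite{MI4} for the classical zeta function of $\GL_n(\Oh_1)$, adapting the key steps using Lemma~\ref{lem} and Proposition~\ref{vanishing}. The main identity will be obtained by computing $\mathcal{Z}(\hat{f},\check{\rho})$ in two different ways.

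First, I would apply Lemma~\ref{lem}(a) to the representation $\check{\rho}$, giving
\[
\mathcal{Z}(\hat{f},\check{\rho}) = \sum_{x \in M} \widehat{\hat{f}}(-x)\,\mathcal{W}(\check{\rho},\psi;x) = \sum_{x \in M} f(x)\,\mathcal{W}(\check{\rho},\psi;x),
\]
using the Fourier inversion identity $\widehat{\hat{f}}(x) = f(-x)$. The next step is to invoke Proposition~\ref{vanishing} applied to $\check{\rho}$ to conclude that $\mathcal{W}(\check{\rho},\psi;x) = 0$ for $x \in M \smallsetminus G$, so the sum collapses to a sum over $g \in G$. To justify the applicability of the proposition to $\check{\rho}$, I would observe that the infinitesimal induction functors commute with taking the contragredient (since $i_{\lambda \hookrightarrow k^n}$ is induction from a subgroup of a pulled-back representation, and the contragredient of an induced representation of a finite group is the induced of the contragredient); hence $\check{\rho}$ lies in the infinitesimally induced series if and only if $\rho$ does.

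The third step is to use Lemma~\ref{lem}(b) applied to $\check{\rho}$, writing $x = 1 \cdot g$ for $g \in G$:
\[
\mathcal{W}(\check{\rho},\psi;g) = \check{\rho}(g)^{-1}\,\mathcal{W}(\check{\rho},\psi;1) = \varepsilon(\rho,\psi)\,\check{\rho}(g)^{-1},
\]
by the very definition of $\varepsilon(\rho,\psi)$ via $\mathcal{W}(\check{\rho},\psi;1) = \varepsilon(\rho,\psi)\,\rho(1)$. A short computation then shows $\check{\rho}(g)^{-1} = {}^t\rho(g)$, so that
\[
\mathcal{Z}(\hat{f},\check{\rho}) = \varepsilon(\rho,\psi) \sum_{g \in G} f(g)\,{}^t\rho(g) = \varepsilon(\rho,\psi)\,{}^t\mathcal{Z}(f,\rho).
\]
Taking the transpose of both sides yields the desired functional equation.

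The entire argument is essentially formal once one has Lemma~\ref{lem} and Proposition~\ref{vanishing} in hand. The main obstacle, which is handled by Proposition~\ref{vanishing}, is the support estimate for $\mathcal{W}(\check{\rho},\psi;\cdot)$ outside the group $G$; the only additional point requiring attention is the compatibility of the contragredient with the infinitesimal induction functors, which follows from the standard isomorphism $(\mathrm{Ind}_H^G \sigma)\check{\phantom{\sigma}} \cong \mathrm{Ind}_H^G \check{\sigma}$ for finite groups and the obvious compatibility of contragredient with pullback along the surjection $P_{\embed{\lambda}{k^n}} \to G_\lambda$.
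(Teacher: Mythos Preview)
Your proof is correct and follows essentially the same approach as the paper: apply Lemma~\ref{lem}(a) with $\hat f$ and $\check\rho$, use Fourier inversion, invoke Proposition~\ref{vanishing} for $\check\rho$ to restrict the sum to $G$, and then use the definition of $\varepsilon(\rho,\psi)$ together with $\check\rho(g)^{-1}={}^t\rho(g)$. The only cosmetic differences are that the paper uses part~(c) of Lemma~\ref{lem} rather than part~(b) (immaterial, since $\mathcal W(\check\rho,\psi;1)$ is scalar) and takes the transpose mid-computation rather than at the end; you also supply the justification that $\check\rho$ is not infinitesimally induced, which the paper simply asserts.
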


\begin{proof} If $\rho$ does not lie in the infinitesimally induced series then nor does $\check{\rho}$, and hence $\mathcal{W}(\check{\rho},\psi;x)=0$ for all $x \in M \smallsetminus G$. We get
\[
\begin{split}
{^t\mathcal{Z}}(\hat{f},\check{\rho}) &= \sum_{g \in G} {\hat{\hat{f}}(-g) {^t\mathcal{W}}(\check{\rho},\psi;g)} \qquad \qquad \qquad \qquad  \qquad \qquad \text{(by Lemma \ref{lem}(a))} \\
&=\mathcal{W}(\check{\rho},\psi;1) \sum_{g \in G}f(g)\rho(g) =\varepsilon(\rho,\psi) \mathcal{Z}(f,\rho) \qquad \qquad \text{(by Lemma \ref{lem}(c))}.
\end{split}
\]
\end{proof}

The possibility of relating representations of $G_{k^n}$ with some equivalence classes of representations of the Weil-Deligne group $W'_F$, and consequently extending Macdonald correspondence to higher level, seems very appealing. However, such correspondence, if exists, is expected to be much more involved in view of the complexity of the representation theory of $G_{k^n}$.
\section{Cuspidal representations which are not strongly cuspidal}
\label{sec:alex}
In this section we give a description of all the cuspidal representations
of $G_{2^{4}}\cong\text{GL}_{4}(\mathfrak{o}_{2})$ in the sense of
Definition 4.2. This shows in particular the existence of representations
which are cuspidal, yet are not strongly cuspidal.

Let $\lambda=(2^{4})$ and put $G=G_{\lambda}$. If $\pi$ is a cuspidal
representation of $G$, then by Proposition 4.4 it is primary, that
is, its orbit in $M_{4}(\mathbb{F}_{q})$ consists of matrices whose
characteristic polynomial is of the form $f(X)^{a}$, where $f(X)$
is an irreducible polynomial. If $a=1$, then $\pi$ is strongly cuspidal
(by definition), and such representations were described in Section~5.
On the other hand, $f(X)$ cannot have degree 1, because then it would
be infinitesimally induced from $G_{(2^{3},1)}$, up to $1$-dimensional
twist (cf. the end of the proof of Theorem 4.3). We are thus reduced
to considering representations whose characteristic polynomial is
a reducible power of a non-linear irreducible polynomial. In the situation
we are considering, there is only one such possibility, namely the
case where $f(X)$ is quadratic, and $a=2$. Let $\eta$ denote an
element which generates the extension $\mathbb{F}_{q^{2}}/\mathbb{F}_{q}$.
We consider $M_{2}(\mathbb{F}_{q^{2}})$ as embedded in $M_{4}(\mathbb{F}_{q})$
via the embedding $\mathbb{F}_{q^{2}}\hookrightarrow M_{2}(\mathbb{F}_{q})$,
by choosing the basis $\{1,\eta\}$ for $\mathbb{F}_{q^{2}}$ over
$\mathbb{F}_{q}$. Rational canonical form implies that in $M_{4}(\mathbb{F}_{q})$
there are two conjugation orbits containing elements with two equal
irreducible $2\times2$ blocks on the diagonal, one regular, and one
which is not regular (we shall call the latter \emph{irregular}),
represented by the following elements, respectively:\[
\beta_{1}=\begin{pmatrix}\eta & 1\\
0 & \eta\end{pmatrix},\qquad\beta_{2}=\begin{pmatrix}\eta & 0\\
0 & \eta\end{pmatrix}.\]
Therefore, any irreducible cuspidal non-strongly cuspidal representation
of $G$ has exactly one of the elements $\beta_{1}$ or $\beta_{2}$
in its orbit.

Denote by $N_{1}\cong1+\varpi M_{2}(\mathfrak{o}_{2})$ the kernel
of the reduction map $G=G_{2^{4}}\rightarrow G_{1^{4}}$. As in Section
2.2, let $\psi$ be a fixed non-trivial additive character of $F$,
trivial on $\mathfrak{o}$. Then for each $\beta\in M_{4}(\mathbb{F}_{q})$
we have a character $\psi_{\beta}:N_{1}\rightarrow\mathbb{C}^{\times}$
defined by \[
\psi_{\beta}(1+\varpi x)=\psi(\mathrm{\varpi^{-1}Tr}(\beta x)).\]

The group $G$ acts on its normal subgroup $N_{1}$ via conjugation,
and thus on the set of characters of $N_{1}$ via the {}``coadjoint
action''. For any character $\psi_{\beta}$ of $N_{1}$, we write
\[
G(\psi_{\beta}):=\Stab_{G}(\psi_{\beta}).\]
By Proposition 2.3 in \cite{MR1334228}, the stabilizer $G(\psi_{\beta})$
is the preimage in $G$ of the centralizer $C_{G_{1^{4}}}(\beta)$,
under the reduction mod $\mathfrak{p}$ map. 

By definition, an irreducible representation $\pi$ of $G$ is cuspidal
iff none of its 1-dimensional twists $\pi\otimes\chi\circ\det$ has
any non-zero vectors fixed under any group $U_{i,j}$ or $U_{\lambda\hookrightarrow2^{4}}$,
or equivalently (by Frobenius reciprocity), if $\pi\otimes\chi\circ\det$
does not contain the trivial representation $\mathbf{1}$ when restricted
to $U_{i,j}$ or $U_{\lambda\hookrightarrow2^{4}}$. The groups $U_{i,j}$
are analogs of unipotent radicals of (proper) maximal parabolic subgroups
of $G$, and $U_{\lambda\hookrightarrow2^{4}}$ are the infinitesimal
analogs of unipotent radicals (cf. Section~3). Note that since $\Ind{U_{i,j}}{G}\mathbf{1}=\Ind{U_{i,j}}{G}(\mathbf{1}\otimes\chi\circ\det)=(\Ind{U_{i,j}}{G}\mathbf{1})\otimes\chi\circ\det$,
for any character $\chi:\mathfrak{o}_{2}^{\times}\rightarrow\mathbb{C}^{\times}$,
a representation is a subrepresentation of a geometrically induced
representation if and only if all its one-dimensional twists are.

In our situation, that is, for $n=4$ and $k=2$, there are three
distinct geometric stabilizers, $P_{1,3}$, $P_{2,2}$, and $P_{3,1}$
with {}``unipotent radicals'' $U_{1,3}$, $U_{2,2}$, and $U_{3,1}$,
respectively. Thus a representation is a subrepresentation of a geometrically
induced representation if and only if it is a component of $\Ind{U_{i,j}}{G}\mathbf{1}$,
for some $(i,j)\in\{(1,3),(2,2),(3,1)\}$. Furthermore, there are
three partitions, written in descending order, which embed in $2^{4}$
and give rise to non-trivial infinitesimal induction functors, namely\[
(2,1^{3}),\ (2^{2},1^{2}),\ (2^{3},1).\]
Thus a representation is a subrepresentation of an infinitesimally
induced representation if and only if it is a component of $\Ind{U_{\lambda\hookrightarrow2^{4}}}{G}\mathbf{1}$,
for some partition $\lambda$ as above. Because of the inclusions\[
U_{(2,1^{3})\hookrightarrow2^{4}}\subset U_{(2^{2},1^{2})\hookrightarrow2^{4}}\subset U_{(2^{3},1)\hookrightarrow2^{4}},\]
an irreducible representation of $G$ is a component of an infinitesimally
induced representation if and only if it is a component of $\Ind{U_{(2,1^{3})\hookrightarrow2^{4}}}{G}\mathbf{1}$.

\begin{lem}
\label{lem:first}Suppose that $\pi$ is an irreducible representation
of $G$ whose orbit contains either $\beta_{1}$ or $\beta_{2}$.
Then $\pi$ is not an irreducible component of any representation
geometrically induced from $P_{1,3}$ or $P_{3,1}$. Moreover, no
$1$-dimensional twist of $\pi$ is an irreducible component of an
infinitesimally induced representation.
\end{lem}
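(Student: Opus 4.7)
The plan is to convert each non-occurrence in the lemma into a vanishing of invariants via Frobenius reciprocity, and then reduce the resulting statements to explicit matrix-theoretic conditions on $\beta_1,\beta_2$ by Clifford theory applied to the abelian normal subgroup $N_1$. Since $\varpi^2=0$ in $\Oh_2$, the subgroup $N_1=I+\varpi\,\M_4(\F_q)$ is abelian, and the assignment $I+\varpi X\leftrightarrow X$ identifies it with $\M_4(\F_q)$ having Pontryagin dual $\beta\mapsto\psi_\beta$. The characters of $N_1$ appearing in $\pi|_{N_1}$ are then precisely the $\psi_\beta$ with $\beta$ in the $\GL_4(\F_q)$-orbit of $\beta_1$ (respectively $\beta_2$), so for any subgroup $H\subset N_1$, $\pi^H=0$ as soon as $\psi_\beta|_H\ne 1$ for every $\beta$ in the orbit. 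A short computation shows that tensoring with $\chi\circ\det$ replaces each $\psi_\beta$ by $\psi_{\beta+cI}$, where $c\in\F_q$ is determined by the restriction of $\chi$ to $1+\varpi\Oh_2$, so the twisted orbit is that of $\beta_1+cI$ or $\beta_2+cI$ and still has characteristic polynomial $f(X-c)^2$, a square of an irreducible quadratic over $\F_q$.

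For the geometric assertion, Frobenius reciprocity for $i_{1,3}$ and $i_{3,1}$ reduces the claim to $\pi^{U_{1,3}}=\pi^{U_{3,1}}=0$, and a fortiori it suffices to verify these vanishings after restricting to the smaller subgroups $U_{1,3}\cap N_1$ and $U_{3,1}\cap N_1$. Via the identification above, $U_{1,3}\cap N_1$ corresponds to the subspace of $\M_4(\F_q)$ supported on the positions $(1,2),(1,3),(1,4)$, and a short trace computation shows that $\psi_\beta$ is trivial on it exactly when the first column of $\beta$ below the diagonal vanishes, i.e.\ when $e_1$ is an $\F_q$-rational eigenvector of $\beta$. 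The dual computation for $U_{3,1}\cap N_1$ gives the condition that $\beta^T$ has $e_4$ as an $\F_q$-rational eigenvector. Both conditions demand an eigenvalue of $\beta$ in $\F_q$, which is impossible since every matrix in the orbit of $\beta_1$ or $\beta_2$ has characteristic polynomial $f(X)^2$ with $f$ irreducible over $\F_q$.

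For the infinitesimal assertion, the inclusion chain of unipotent groups recorded just above the lemma reduces matters to showing $(\pi\otimes\chi\circ\det)^{U_{(2,1^3)\hookrightarrow 2^4}}=0$ for every character $\chi$ of $\Oh_2^\times$. Unwinding the embedding $\Oh_{(2,1^3)}\hookrightarrow\Oh_2^4$ from Section~\ref{subsec:infinitesimal} shows that $U_{(2,1^3)\hookrightarrow 2^4}$ is automatically contained in $N_1$ and corresponds under our identification to $\{X\in\M_4(\F_q):Xe_1=0\}$, i.e.\ the subspace of matrices with vanishing first column. A trace calculation then shows $\psi_{\beta+cI}$ is trivial on this subspace iff rows $2,3,4$ of $\beta+cI$ all vanish, which would force $\beta+cI$ to have rank at most one. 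But the characteristic polynomial $f(X-c)^2$ of $\beta+cI$ has nonzero constant term $f(-c)^2$ (since $f$ is irreducible over $\F_q$), so $\beta+cI$ is invertible, yielding the desired contradiction. The main subtlety I anticipate is tracing the definition of $U_{(2,1^3)\hookrightarrow 2^4}$ through Section~\ref{subsec:infinitesimal} carefully enough to confirm that it lies in $N_1$ and corresponds to exactly the stated subspace; once this dictionary is in place, the remainder is routine linear algebra on eigenvalues and ranks.
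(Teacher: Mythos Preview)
Your proof is correct and follows essentially the same strategy as the paper's: reduce each non-occurrence via Frobenius reciprocity to a vanishing of $U$-invariants, pass to the abelian normal subgroup $N_1$ by Clifford theory, and translate the resulting condition on $\psi_\beta$ into an explicit matrix constraint on $\beta$ (or $\beta+cI$) that contradicts the characteristic polynomial being $f(X)^2$ with $f$ irreducible quadratic. The geometric cases are handled identically in both arguments. In the infinitesimal case there is a cosmetic difference: the paper records the condition as ``$b_{1j}=0$ for all $j$'' and concludes via a linear factor, whereas your trace computation gives ``rows $2,3,4$ of $\beta+cI$ vanish'' and you conclude via rank $\le 1$ versus invertibility; both routes yield the same contradiction, so this is a difference in bookkeeping rather than in method.
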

\begin{proof}
If $\pi$ were a component of $\Ind{U_{1,3}}{G}\mathbf{1}$, then
$\langle\pi|_{U_{1,3}},\mathbf{1}\rangle\neq0$, so in particular
$\langle\pi|_{N_{1}\cap U_{1,3}},\mathbf{1}\rangle\neq0$, which implies
that $\pi|_{N_{1}}$ contains a character $\psi_{b}$, where $b=(b_{ij})$
is a matrix such that $b_{i1}=0$ for $i=2,3,4$. This means that
the characteristic polynomial of $b$ would have a linear factor,
which contradicts the hypothesis. The case of $U_{3,1}$ is treated
in exactly the same way, except that the matrix $b$ will have $b_{4j}=0$
for $j=1,2,3$. The case of infinitesimal induction is treated using
the same kind of argument. Namely, if $\pi$ were a component of $\Ind{U_{(2,1^{3})\hookrightarrow2^{4}}}{G}\mathbf{1}$,
then $U_{(2,1^{3})\hookrightarrow2^{4}}\subset N_{1}$ and $\langle\pi|_{U_{(2,1^{3})\hookrightarrow2^{4}}},\mathbf{1}\rangle\neq0$,
which implies that $\pi|_{N_{1}}$ contains a character $\psi_{b}$,
where $b=(b_{ij})$ is a matrix such that $b_{1j}=0$ for $j=1,\dots,4$.
A $1$-dimensional twist of $\pi$ would then contain a character
$\psi_{aI+b}$, where $a$ is a scalar and $I$ is the identity matrix.
The matrix $aI+b$ has a linear factor in its characteristic polynomial,
which contradicts the hypothesis.
\end{proof}
We now consider in order representations whose orbits contain $\beta_{1}$
or $\beta_{2}$, respectively. In the following we will write $\bar{P}_{2,2}$
and $\bar{U}_{2,2}$ for the images mod $\mathfrak{p}$ of the groups
$P_{2,2}$ and $U_{2,2}$, respectively.

\subsection{The regular cuspidal representations}

Assume that $\pi$ is an irreducible representation of $G$ whose
orbit contains $\beta_{1}$. Since $\beta_{1}$ is a regular element,
the representation $\pi$ can be constructed explicitly as an induced
representation (cf. \cite{MR1334228}). In particular, it is shown
in \cite{MR1334228} that there exists a $1$-dimensional representation
$\rho$ of $G(\psi_{\beta_{1}})$ (uniquely determined by $\pi$)
such that $\rho|_{N_{1}}=\psi_{\beta_{1}}$, and such that\[
\pi=\Ind{G(\psi_{\beta_{1}})}{G}\rho.\]

\begin{prop}
\label{pro:cuspidal beta1}The representation $\pi$ is cuspidal if
and only if $\rho$ does not contain the trivial representation of
$G(\psi_{\beta_{1}})\cap U_{2,2}$.
\end{prop}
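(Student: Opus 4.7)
Plan: I would use Lemma~\ref{lem:first} to reduce cuspidality to $\pi^{U_{2,2}}=0$, translate this into a condition on $\rho$ via Mackey's formula, and identify which double cosets can contribute using the structure of $\beta_{1}$.

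First, by Lemma~\ref{lem:first}, cuspidality of $\pi$ is equivalent to the absence of $U_{2,2}$-fixed vectors in every $1$-dimensional twist of $\pi$. Since $U_{2,2}$ is unipotent, $\chi\circ\det$ restricts trivially to $U_{2,2}$, so both sides of the claimed equivalence are independent of the twist. It therefore suffices to show that $\pi^{U_{2,2}}=0$ if and only if $\rho|_{G(\psi_{\beta_{1}})\cap U_{2,2}}\neq\mathbf{1}$.

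Using $\pi=\Ind{G(\psi_{\beta_{1}})}{G}\rho$ and Mackey's formula,
\begin{equation*}
\pi^{U_{2,2}}\cong\bigoplus_{g\in G(\psi_{\beta_{1}})\backslash G/U_{2,2}}\rho^{G(\psi_{\beta_{1}})\cap gU_{2,2}g^{-1}};
\end{equation*}
since $\rho$ is $1$-dimensional, the summand at $g$ is non-zero iff $\rho|_{G(\psi_{\beta_{1}})\cap gU_{2,2}g^{-1}}=\mathbf{1}$. A necessary condition is that $\psi_{\beta_{1}}=\rho|_{N_{1}}$ be trivial on $g(N_{1}\cap U_{2,2})g^{-1}$, which by direct computation using $\psi_{\beta_{1}}(1+\varpi z)=\psi(\varpi^{-1}\tr(\beta_{1}z))$ is equivalent to $\bar{g}\langle e_{1},e_{2}\rangle$ being $\beta_{1}$-invariant in $\mathbb{F}_{q}^{4}$. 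Now $\mathbb{F}_{q}^{4}$ is cyclic as an $\mathbb{F}_{q}[X]/f(X)^{2}$-module, so it has a unique proper non-trivial submodule, namely $\langle e_{1},e_{2}\rangle$, and the condition forces $\bar{g}\in\bar{P}_{2,2}$.

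The key remaining step is to show that for every $g$ with $\bar{g}\in\bar{P}_{2,2}$, the condition $\rho|_{G(\psi_{\beta_{1}})\cap gU_{2,2}g^{-1}}=\mathbf{1}$ is equivalent to the single condition $\rho|_{G(\psi_{\beta_{1}})\cap U_{2,2}}=\mathbf{1}$. Using that the preimage of $\bar{P}_{2,2}$ in $G$ equals $N_{1}\cdot P_{2,2}$ and that $P_{2,2}$ normalizes $U_{2,2}$, we reduce to $g=n\in N_{1}$. Both $G(\psi_{\beta_{1}})\cap U_{2,2}$ and $G(\psi_{\beta_{1}})\cap nU_{2,2}n^{-1}$ are extensions of the common quotient $C_{G_{1^{4}}}(\beta_{1})\cap\bar{U}_{2,2}$ by $N_{1}\cap U_{2,2}$ (on which $\rho$ is already trivial), and the induced characters on that quotient differ by $\psi_{\beta_{1}}$ applied to a commutator-type element coming from a lift discrepancy. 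A direct calculation shows this element lies in $\ker\psi_{\beta_{1}}$: any element of $C_{G_{1^{4}}}(\beta_{1})\cap\bar{U}_{2,2}$, viewed in $M_{4}(\mathbb{F}_{q})$, is a strictly upper-triangular $2\times 2$-block commuting with $\beta_{1}$ and hence squaring to zero, which forces the relevant trace to vanish by cyclicity. Consequently every contributing summand imposes the same condition on $\rho$, and $\pi^{U_{2,2}}\neq 0$ if and only if $\rho|_{G(\psi_{\beta_{1}})\cap U_{2,2}}=\mathbf{1}$. The main obstacle is precisely this commutator computation, which uses in essential ways both the uniqueness of the $\beta_{1}$-invariant $2$-subspace and the regular primary structure of $\beta_{1}$.
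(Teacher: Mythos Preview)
Your overall strategy coincides with the paper's: reduce to $U_{2,2}$ via Lemma~\ref{lem:first}, apply Mackey, and show that only double cosets with $\bar g\in\bar P_{2,2}$ can contribute, so the whole Mackey sum collapses to the term at the identity. Where you differ is in how you reach $\bar g\in\bar P_{2,2}$: you invoke the cyclic $\mathbb{F}_q[X]/f(X)^2$-module structure and the uniqueness of the $\beta_1$-invariant $2$-plane, whereas the paper writes $\bar g^{-1}\beta_1\bar g$ in block form, uses the Levi decomposition of $\bar P_{2,2}$ to compare semisimple parts, and lands in the centralizer $C_{G_{1^4}}(\beta_2)\cong\GL_2(\mathbb{F}_{q^2})$. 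Your route is shorter and more conceptual for $\beta_1$; the paper's computation, on the other hand, is reused verbatim in the $\beta_2$ case, where there is no uniqueness of invariant planes.

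Your final reduction step is correct but overcomplicated, and the justification you give is not the operative one. Once $g=np$ with $n\in N_1\subset G(\psi_{\beta_1})$ and $p\in P_{2,2}$, you have ${}^gU_{2,2}={}^nU_{2,2}$ and hence $G(\psi_{\beta_1})\cap{}^gU_{2,2}={}^n\bigl(G(\psi_{\beta_1})\cap U_{2,2}\bigr)$; since $\rho$ is $1$-dimensional, $\rho|_{{}^nH}=\mathbf{1}$ iff $\rho|_H=\mathbf{1}$. No commutator computation is needed, and the nilpotency (``squaring to zero'') of elements of $C_{G_{1^4}}(\beta_1)\cap\bar U_{2,2}$ plays no role---what makes the trace vanish in your commutator approach is simply that $\bar h$ centralizes $\beta_1$, i.e.\ again that $\rho$ is a character on $G(\psi_{\beta_1})$. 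This is exactly how the paper closes the argument.
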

\begin{proof}
Lemma \ref{lem:first} shows that $\pi$ is cuspidal if and only if
it is not a component of $\Ind{U_{2,2}}{G}\mathbf{1}$. By Mackey's
intertwining number theorem (cf. \cite{Curtis_Reiner}, 44.5), we
have\[
\langle\pi,\Ind{U_{2,2}}{G}\mathbf{1}\rangle=\langle\Ind{G(\psi_{\beta_{1}})}{G}\rho,\Ind{U_{2,2}}{G}\mathbf{1}\rangle=\sum_{x\in G(\psi_{\beta_{1}})\backslash G/U_{2,2}}\langle\rho|_{G(\psi_{\beta_{1}})\cap\leftexp{x}{U_{2,2}}},\mathbf{1}\rangle,\]
so this number is zero if and only if $\langle\rho|_{G(\psi_{\beta_{1}})\cap\leftexp{x}{U_{2,2}}},\mathbf{1}\rangle=0$
for each $x\in G$. Assume that $\pi$ is cuspidal. Then in particular,
taking $x=1$, we have $\langle\rho|_{G(\psi_{\beta_{1}})\cap U_{2,2}},\mathbf{1}\rangle=0$.

Conversely, assume that $\pi$ is not cuspidal. Then $\langle\rho|_{G(\psi_{\beta_{1}})\cap\leftexp{x}{U_{2,2}}},\mathbf{1}\rangle\neq0$,
for some $x\in G$, and in particular, $\langle\rho|_{N_{1}\cap\leftexp{x}{U_{2,2}}},\mathbf{\mathbf{1}}\rangle=\langle\psi_{\beta_{1}}|_{N_{1}\cap\leftexp{x}{U_{2,2}}},\mathbf{1}\rangle\neq0$.
Write $\bar{x}$ for $x$ modulo $\mathfrak{p}$. Now $\psi_{\beta_{1}}|_{N_{1}\cap\leftexp{x}{U_{2,2}}}=\psi_{\beta_{1}}|_{\leftexp{x}{(N_{1}\cap U_{2,2})}}$,
and $\psi_{\beta_{1}}(\leftexp{x}{g})=\psi_{\bar{x}^{-1}\beta_{1}\bar{x}}(g)$,
for any $g\in N_{1}\cap U_{2,2}$. Let $\bar{x}^{-1}\beta_{1}\bar{x}$
be represented by the matrix\[
\begin{pmatrix}A_{11} & A_{12}\\
A_{21} & A_{22}\end{pmatrix},\]
where each $A_{ij}$ is a $2\times2$-block. Then from the definition
of $\psi_{\bar{x}^{-1}\beta_{1}\bar{x}}$ and the condition $\psi_{\bar{x}^{-1}\beta_{1}\bar{x}}(g)=1$,
for all $g\in N_{1}\cap U_{2,2}$, it follows that $A_{21}=0$; thus
\[
\bar{x}^{-1}\beta_{1}\bar{x}\in\bar{P}_{2,2}.\]
Since $\bar{x}^{-1}\beta_{1}\bar{x}$ is a block upper-triangular
matrix with the same characteristic polynomial as $\beta_{1}$, we
must have $A_{11}=B_{1}\eta B_{1}^{-1}$, $A_{22}=B_{2}\eta B_{2}^{-1}$,
for some $B_{1},B_{2}\in\mbox{GL}_{2}(\mathbb{F}_{q})$. Then there
exists $p\in\bar{P}_{2,2}$ such that \[
(\bar{x}p)^{-1}\beta_{1}(\bar{x}p)=\begin{pmatrix}\eta & B\\
0 & \eta\end{pmatrix},\]
for some $B\in M_{2}(\mathbb{F}_{q})$ (in fact, we can take $p=\left(\begin{smallmatrix}B_{1}^{-1} & 0\\
0 & B_{2}^{-1}\end{smallmatrix}\right)$). The Levi decomposition%
{} $\bar{P}_{2,2}=\left(\begin{smallmatrix}* & 0\\
0 & *\end{smallmatrix}\right)\left(\begin{smallmatrix}1 & *\\
0 & 1\end{smallmatrix}\right)$ (written in block matrix form) applied to $\beta_{1}$ and $(\bar{x}p)^{-1}\beta_{1}(\bar{x}p)$
implies that the semisimple parts $(\bar{x}p)^{-1}\left(\begin{smallmatrix}\eta & 0\\
0 & \eta\end{smallmatrix}\right)(\bar{x}p)$ and $\left(\begin{smallmatrix}\eta & 0\\
0 & \eta\end{smallmatrix}\right)$ are equal, that is, \[
\bar{x}p\in C_{G_{1^{4}}}(\left(\begin{smallmatrix}\eta & 0\\
0 & \eta\end{smallmatrix}\right))=G(\beta_{2})\cong\mbox{GL}_{2}(\mathbb{F}_{q^{2}}).\]
Now, in $G(\beta_{2})$, the equation $(\bar{x}p)^{-1}\beta_{1}(\bar{x}p)=\left(\begin{smallmatrix}\eta & B\\
0 & \eta\end{smallmatrix}\right)$ implies that $\bar{x}p\in\left(\begin{smallmatrix}* & *\\
0 & *\end{smallmatrix}\right)\cap G(\beta_{2})\subset\bar{P}_{2,2}$, so $\bar{x}\in\bar{P}_{2,2}$, and hence $x\in N_{1}P_{2,2}$. The
facts that $U_{2,2}$ is normal in $P_{2,2}$, and that $\langle\rho|_{G(\psi_{\beta_{1}})\cap\leftexp{x}{U_{2,2}}},\mathbf{1}\rangle$
only depends on the right coset of $x$ modulo $N_{1}$ then imply
that \[
0\neq\langle\rho|_{G(\psi_{\beta_{1}})\cap\leftexp{x}{U_{2,2}}},\mathbf{1}\rangle=\langle\rho|_{G(\psi_{\beta_{1}})\cap U_{2,2}},\mathbf{1}\rangle.\]

\end{proof}
The preceding proposition shows that we can construct all the cuspidal
representations of $G$ with orbit containing $\beta_{1}$ by constructing
the corresponding $\rho$ on $G(\psi_{\beta_{1}})$. Since $\psi_{\beta_{1}}$
is trivial on $N_{1}\cap U_{2,2}$, we can extend $\psi_{\beta_{1}}$
to a representation of $(G(\psi_{\beta_{1}})\cap U_{2,2})N_{1}$,
trivial on $G(\psi_{\beta_{1}})\cap U_{2,2}$. Then $\psi_{\beta_{1}}$
can be extended to a representation $\tilde{\psi}_{\beta_{1}}$ on
the whole of $G(\psi_{\beta_{1}})$, such that $\tilde{\psi}_{\beta_{1}}$
is trivial on $G(\psi_{\beta_{1}})\cap U_{2,2}$ (this incidentally
shows that there exist irreducible non-cuspidal representations of
$G$ whose orbit contains $\beta_{1}$). Now let $\theta$ be a representation
of $G(\psi_{\beta_{1}})$ obtained by pulling back a representation
of $G(\psi_{\beta_{1}})/N_{1}$ that is non-trivial on $(G(\psi_{\beta_{1}})\cap U_{2,2})N_{1}/N_{1}$.
Then $\rho:=\theta\otimes\tilde{\psi}_{\beta_{1}}$ is a representation
of $G(\psi_{\beta_{1}})$ which is a lift of $\psi_{\beta_{1}}$,
and which is non-trivial on $G(\psi_{\beta_{1}})\cap U_{2,2}$. By
a standard fact in representation theory, all the lifts of $\psi_{\beta_{1}}$
to $G(\psi_{\beta_{1}})$ are of the form $\theta\otimes\tilde{\psi}_{\beta_{1}}$
for some $\theta$ trivial on $N_{1}$. Thus all the representations
of $G(\psi_{\beta_{1}})$ which are lifts of $\psi_{\beta_{1}}$ and
which are non-trivial on $G(\psi_{\beta_{1}})\cap U_{2,2}$, are of
the form above, namely $\theta\otimes\tilde{\psi}_{\beta_{1}}$ where
$\theta$ is trivial on $N_{1}$ but non-trivial on $G(\psi_{\beta_{1}})\cap U_{2,2}$.
We note that in the regular case, distinct representations $\theta$
give rise to distinct lifts $\theta\otimes\tilde{\psi}_{\beta_{1}}$.
This can be seen by a counting argument, in the following way. Because
$\beta_{1}$ lies in a regular orbit, we can write $G(\psi_{\beta_{1}})=C_{G}(\hat{\beta}_{1})N_{1}$,
for some element $\hat{\beta}_{1}\in M_{2}(\mathfrak{o}_{2})$ with
image $\beta_{1}$ mod $\mathfrak{p}$. Then because $C_{G}(\hat{\beta}_{1})$
is abelian, there are exactly $(C_{G}(\hat{\beta}_{1}):C_{G}(\hat{\beta}_{1})\cap N_{1})=|G(\psi_{\beta_{1}})/N_{1}|$
characters $\chi$ of $C_{G}(\hat{\beta}_{1})$ which agree with $\psi_{\beta_{1}}$
on $C_{G}(\hat{\beta}_{1})\cap N_{1}$, and each of them gives rise
to a representation $\chi\psi_{\beta_{1}}$ of $G(\psi_{\beta_{1}})$
defined by $\chi\psi_{\beta_{1}}(cn)=\chi(c)\psi_{\beta_{1}}(n)$,
for $c\in C_{G}(\hat{\beta}_{1})$, $n\in N_{1}$. Clearly every lift
of $\psi_{\beta_{1}}$ to $G(\psi_{\beta_{1}})$ must be equal to
some such $\chi$ on $C_{G}(\hat{\beta}_{1})$, and distinct $\chi$
give rise to distinct representations $\chi\psi_{\beta_{1}}$. Since
the number of lifts of $\psi_{\beta_{1}}$ to $G(\psi_{\beta_{1}})$
is thus equal to the number of representations of $G(\psi_{\beta_{1}})/N_{1}$,
we see that distinct $\theta$ give rise to distinct representations
$\theta\otimes\tilde{\psi}_{\beta_{1}}$. Now by a standard result
in Clifford theory, distinct irreducible representations of $G(\psi_{\beta_{1}})$
containing $\psi_{\beta_{1}}$ (when restricted to $N_{1}$) induce
to distinct irreducible representations of $G$. Thus, distinct representations
$\theta$ give rise to distinct representations $\Ind{G(\psi_{\beta_{1}})}{G}\rho$,
although the correspondence $\theta\mapsto\Ind{G(\psi_{\beta_{1}})}{G}\rho$
is by no means canonical, due to the choice of $\tilde{\psi}_{\beta_{1}}$.
Similarly, if we are considering the lifts $\chi\psi_{\beta_{1}}$,
then the construction depends on the choice of $\hat{\beta}_{1}$.

The above parameterizations of representations of $G(\psi_{\beta_{1}})$
containing $\psi_{\beta_{1}}$, both involve non-canonical choices,
although the set of representations obtained is certainly uniquely
determined. Nevertheless, Proposition \ref{pro:cuspidal beta1} shows
that there is a canonical 1-1 correspondence (given simply by induction)
between on the one hand irreducible representations of $G(\psi_{\beta_{1}})$
which contain $\psi_{\beta_{1}}$ and which are non-trivial on $G(\psi_{\beta_{1}})\cap U_{2,2}$,
and on the other hand cuspidal representations of $G$ with $\beta_{1}$
in their respective orbits. We shall now extend this result to cuspidal
representations which have $\beta_{2}$ in their respective orbits,
and thus cover all cuspidal representations of $G$.

\subsection{The irregular cuspidal representations}

Assume now that $\pi$ is an irreducible representation of $G$ whose
orbit contains $\beta_{2}$. Although $\beta_{2}$ is not regular,
it is strongly semisimple in the sense of \cite{MR1311772},
Definition 3.1, and thus $\pi$ can be constructed explicitly in a
way similar to the regular case. More precisely, Proposition 3.3 in
\cite{MR1311772} implies that there exists an irreducible
representation $\tilde{\psi}_{\beta_{2}}$ of $G(\psi_{\beta_{2}})$,
such that $\tilde{\psi}_{\beta_{2}}|_{N_{1}}=\psi_{\beta_{2}}$, and
any extension of $\psi_{\beta_{2}}$ to $G(\psi_{\beta_{2}})$ is
of the form $\rho:=\theta\otimes\tilde{\psi}_{\beta_{2}}$, for some
irreducible representation $\theta$ pulled back from a representation
of $G(\psi_{\beta_{2}})/N_{1}$. Then \[
\pi=\Ind{G(\psi_{\beta_{2}})}{G}\rho\]
is an irreducible representation, any representation of $G$ with
$\beta_{2}$ in its orbit is of this form, and as in the regular case,
$\rho$ is uniquely determined by $\pi$. We then have a result completely
analogous to the previous proposition:

\begin{prop}
The representation $\pi$ is cuspidal if and only if $\rho$ does
not contain the trivial representation of $G(\psi_{\beta_{2}})\cap U_{2,2}$.
\end{prop}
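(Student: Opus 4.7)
The plan is to follow closely the strategy of Proposition \ref{pro:cuspidal beta1}. By Lemma \ref{lem:first}, cuspidality of $\pi$ is equivalent to $\langle \pi, \Ind{U_{2,2}}{G}\mathbf{1}\rangle = 0$, and Mackey's intertwining number theorem, applied to $\pi = \Ind{G(\psi_{\beta_2})}{G}\rho$, rewrites this pairing as
\[
\sum_{x \in G(\psi_{\beta_2}) \backslash G / U_{2,2}} \langle \rho|_{G(\psi_{\beta_2}) \cap \leftexp{x}{U_{2,2}}}, \mathbf{1}\rangle.
\]
The summand at $x=1$ furnishes the easy direction: if $\pi$ is cuspidal then $\rho|_{G(\psi_{\beta_2}) \cap U_{2,2}}$ does not contain the trivial representation. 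The content lies in the converse, and my plan is to show that as soon as some summand is nonzero, the $x=1$ summand must itself be nonzero.

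For this I would first observe, as in the $\beta_1$ argument, that a nonzero summand forces $\psi_{\beta_2}$ to be trivial on $N_1 \cap \leftexp{x}{U_{2,2}}$, which translates to the vanishing of the $(2,1)$-block of $\bar x^{-1}\beta_2\bar x$. Hence $\bar x^{-1}\beta_2\bar x \in \bar{P}_{2,2}$, and its two diagonal blocks both have characteristic polynomial $f(X)$, so are each similar to $\eta$. Conjugating by a suitable block-diagonal $p \in \bar{P}_{2,2}$ one may arrange
\[
(\bar x p)^{-1}\beta_2(\bar x p) = \begin{pmatrix} \eta & B' \\ 0 & \eta \end{pmatrix}
\]
for some $B' \in M_2(\mathbb{F}_q)$.

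The key step, and the one I expect to be the main obstacle, is showing that $B'$ can be killed by further conjugation inside $\bar{P}_{2,2}$. The displayed matrix is still conjugate to $\beta_2$ and therefore semisimple; a direct calculation in an eigenbasis of $\eta$ over $\overline{\mathbb{F}_q}$, using that $\eta$ and $\eta^q$ are distinct, should identify the set of $B'$ for which this matrix is semisimple with the image of the commutator map $X \mapsto [\eta, X]\colon M_2(\mathbb{F}_q) \to M_2(\mathbb{F}_q)$ (were $B'$ outside this image, the matrix would have minimal polynomial $f(X)^2$ and hence be conjugate to $\beta_1$, contradicting the fact that $\beta_1$ and $\beta_2$ lie in disjoint conjugation orbits). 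Choosing $X$ with $[\eta, X] = B'$ and conjugating by $\left(\begin{smallmatrix} I & X \\ 0 & I \end{smallmatrix}\right) \in \bar{P}_{2,2}$ then reduces the matrix to $\beta_2$, giving $\bar x \in G(\beta_2) \cdot \bar{P}_{2,2}$ and therefore $x \in G(\psi_{\beta_2}) \cdot P_{2,2}$.

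The argument will then conclude exactly as in the regular case: writing $x = gp$ with $g \in G(\psi_{\beta_2})$ and $p \in P_{2,2}$, the normality of $U_{2,2}$ in $P_{2,2}$ gives $\leftexp{x}{U_{2,2}} = \leftexp{g}{U_{2,2}}$, and since conjugation by $g$ is an inner automorphism of $G(\psi_{\beta_2})$ that preserves $\rho$, the invariants $\langle \rho|_{G(\psi_{\beta_2}) \cap \leftexp{x}{U_{2,2}}}, \mathbf{1}\rangle$ and $\langle \rho|_{G(\psi_{\beta_2}) \cap U_{2,2}}, \mathbf{1}\rangle$ have the same dimension, yielding the desired nonvanishing.
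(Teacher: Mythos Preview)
Your argument is correct and follows the paper's route essentially verbatim: reduce via Mackey theory, show that any nonzero summand forces $x\in G(\psi_{\beta_2})P_{2,2}$, and conclude using that the intertwining number is unchanged under left translation of $x$ by $G(\psi_{\beta_2})$. The only difference is that where the paper simply says the semisimple-parts step from the $\beta_1$ proof carries over to give $\bar{x}p\in C_{G_{1^4}}(\beta_2)$ directly, you supply an explicit version of that step (identifying $B'$ as a commutator $[\eta,X]$ by semisimplicity and killing it with one further conjugation by an element of $\bar{U}_{2,2}$).
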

\begin{proof}
The proof of Proposition \ref{pro:cuspidal beta1} with $\beta_{1}$
replaced by $\beta_{2}$, goes through up to the point where (under
the assumption that $\pi$ is not cuspidal) we get $\bar{x}p\in C_{G_{1^{4}}}(\left(\begin{smallmatrix}\eta & 0\\
0 & \eta\end{smallmatrix}\right))=G(\psi_{\beta_{2}})/N_{1}$. It then follows that $x\in G(\psi_{\beta_{2}})P_{2,2}$, and since
$U_{2,2}$ is normal in $P_{2,2}$, and $\langle\rho|_{G(\psi_{\beta_{2}})\cap\leftexp{x}{U_{2,2}}},\mathbf{1}\rangle$
only depends on the right coset of $x$ modulo $G(\psi_{\beta_{2}})$,
we get \[
0\neq\langle\rho|_{G(\psi_{\beta_{2}})\cap\leftexp{x}{U_{2,2}}},\mathbf{1}\rangle=\langle\rho|_{G(\psi_{\beta_{2}})\cap U_{2,2}},\mathbf{1}\rangle.\]

\end{proof}

\end{document}